\newtheorem{thm}{Theorem}[section]
\newtheorem{cor}[thm]{Corollary}
\newtheorem{lem}[thm]{Lemma}
\newtheorem{prop}[thm]{Proposition}
\theoremstyle{definition}
\theoremstyle{remark}
\newtheorem{rem}[thm]{Remark}
\numberwithin{equation}{section}
\newcommand{\dist}{\text{dist }}
\begin{document}

\title{Maximal functions for multipliers on compact manifolds}

\author{Woocheol Choi}
\address[Woocheol Choi]{Department of Mathematical Sciences, Seoul National University, 1 Gwanak-ro, Gwanak-gu, Seoul 151-747, Republic of Korea}
\email{chwc1987@math.snu.ac.kr}

\subjclass[2000]{Primary}

\maketitle

\begin{abstract}
Let $P$ be a self-adjoint positive elliptic (-pseudo) differential operator on a compact manifold $M$ without boundary. For  a function $m \in L^{\infty}[0,\infty)$ satisfying a H\"ormander-Mikhlin type condition, Seeger and Sogge \cite{ss} proved that the multiplier theorem $\|m(P) f \|_{L^p(M)} \leq C_p \| f\|_{L^p (M)}$ holds. In this paper, we prove that $\| \sup_{1 \leq i \leq N} |m_i (P) f |\|_{L^p} \leq C_p (\log (N+1))^{1/2} \| f\|_{L^p} $ holds when $\{ m_i\}_{i=1}^{N}$ uniformly satisfy the condition. This result is sharp when $M$ is  $n$ dimensional torus.
\end{abstract}

\section{introduction}

In this paper we study the multiplier operators on compact manfiolds without boundary. The $L^p$-boundedness property of multipliers was established by Seeger and Sogge \cite{ss} under the Hormander-Mikhlin type condition. We obtain a result on $L^p$-boundedness of maximal functions of the multipliers.
\

Let $M$ be a compact manifold of dimension $n \geq 2$ without boundary. We consider a first order elliptic pseudo-differential operator $P$. We assume that $P$ is positive and self-adjoint with respect to a $C^{\infty}$ density $dx$ on $M$. Since the inverse operator of $P$ is compact on $L^2(M) = L^2 (M,dx)$ and $P$ is self-adjoint, the spectral theorem implies that
\begin{eqnarray*}
L^2 (M) = \sum_{j=1}^{\infty} E_j,
\end{eqnarray*}
where $E_j$ is an eigenspace of dimension one of the operator $P$ with an eigenvalue $\lambda_j$. Here we assume that $\{ \lambda_j\}$ is arranged as $0 < \lambda_1 \leq \lambda_2 \leq \cdots .$ Let $e_j$ be the projection operator onto the eigenspace $E_j$. Then for any $f \in L^{2}(M)$ we have 
\begin{eqnarray*}
f = \sum_{j=1}^{\infty} e_j (f),
\end{eqnarray*}
and
\begin{equation}\label{eq-f-l2} 
\| f\|_{L^2(M)}^2 = \sum_j^{\infty} \|e_j (f)\|_{L^2(M)}^2.
\end{equation}
For $m \in L^{\infty}([0,\infty))$ the multiplier operator $m(P): L^2 (M) \rightarrow L^2 (M)$ associated to $P$ is defined by
\begin{equation}\label{mp}
m(P) f = \sum_{j=1}^{\infty} m(\lambda_j) e_j (f),\quad f \in L^2 (M).
\end{equation}
From \eqref{eq-f-l2} we see that $m(P)$ is bounded on $L^2 (M)$ for any $m \in L^{\infty}([0,\infty))$. Meanwhile, we need to impose an additional condition on $m$ to guarantee that $m(P)$ is bounded on $L^p (M)$ for $p \neq 2$. Under a condition on $m$ involving that $m$ is a $C^{\infty}$ function, the $L^p$-bound of $m (P)$ for $1<p< \infty$ was a classical result (see \cite{taylor}). A sharp result was obtained later by Seeger and Sogge \cite{ss}  where they established the $L^p$-bound result for $1<p<\infty$ under the H\"ormander-Mikhlin type condition. To state the result, we take a function $\beta \in C_0^{\infty} ((1/2,2))$ such that $\sum_{-\infty}^{\infty} \beta(2^j s) = 1 , s>0$, and introduce the functional
\begin{eqnarray}\label{smooth}
[m]_s = \sup_{0 \leq \alpha \leq s}\left[\sup_{\lambda >0} \lambda^{-1} \int^{\infty}_{-\infty} | \lambda^{\alpha} D_s^{\alpha} ( \beta(s/\lambda) m(s))|^2 ds \right].
\end{eqnarray}
We state the multiplier theorem of Seeger and Sooger \cite{ss}:
\begin{thm}[\cite{ss}] Let $s \in \mathbb{R}^{+}$ such that $s>\frac{n}{2}$. Then for any $m \in L^{\infty}([0,\infty))$ with finite $[m]_s$, we have
\begin{equation}\label{eq-main}
\| m(P) f \|_{L^p (M)} \leq C \left[m\right]_s \| f\|_p, \qquad 1 < p < \infty,\quad \forall f \in L^p (M).
\end{equation} 
Here the constant $C$ is independent of $m$ and $f$. 
\end{thm}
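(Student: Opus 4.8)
The plan is to deduce \eqref{eq-main} from Calder\'on--Zygmund theory. Since $m(P)^{*} = \overline{m}(P)$ and $[\overline{m}]_{s} = [m]_{s}$, duality reduces matters to $1 < p \le 2$, and for this it suffices to show that $m(P)$ is bounded on $L^{2}(M)$ with operator norm $\lesssim [m]_{s}$ and that its Schwartz kernel $K(x,y)$ satisfies the H\"ormander integral condition
$$\sup_{y,y'\in M}\ \int_{d(x,y)\,>\,2\,d(y,y')} |K(x,y) - K(x,y')|\, dx\ \le\ C\,[m]_{s};$$
the $L^{2}$ bound is immediate from \eqref{eq-f-l2} once one notes that $\|m\|_{L^{\infty}} \lesssim [m]_{s}$ (one-dimensional Sobolev embedding, legitimate since $s > n/2 \ge 1$), and a standard Calder\'on--Zygmund decomposition then yields the weak-type $(1,1)$ estimate, hence $L^{p}$ for $1 < p \le 2$ by interpolation.

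To verify the kernel condition, fix a Littlewood--Paley decomposition $m = \sum_{k \ge k_{0}} m_{k}$, $m_{k}(\tau) = \beta(2^{-k}\tau)\, m(\tau)$, where $2^{k_{0}} \sim \lambda_{1}$ (the pieces with $2^{k} \ll \lambda_{1}$ vanish on the spectrum of $P$), so that $m(P) = \sum_{k} m_{k}(P)$ on $L^{2}(M)$. Rescaling, $M_{k}(\tau) := \beta(\tau)\, m(2^{k}\tau)$ is supported in $(1/2, 2)$, and reading \eqref{smooth} with $\lambda = 2^{k}$ gives $\|M_{k}\|_{W^{\sigma,2}(\mathbb{R})} \le C\, [m]_{s}$ uniformly in $k$ for every $0 \le \sigma \le s$. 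Now use the representation $m_{k}(P) = \tfrac{1}{2\pi}\int \widehat{m_{k}}(t)\, e^{itP}\, dt$ and the finite propagation speed of the half-wave group: the Schwartz kernel of $e^{itP}$ is supported in $\{\, d(x,y) \le c_{0}|t|\,\}$ for a constant $c_{0}$ depending only on the principal symbol of $P$ (the maximal speed of its Hamilton flow). Split the $t$-integral at $|t| = \varepsilon_{0}$, with $\varepsilon_{0}$ a small fraction of the injectivity radius. For $|t| \le \varepsilon_{0}$ the H\"ormander--Lax parametrix represents $e^{itP}$ in local coordinates as a Fourier integral operator $(2\pi)^{-n}\!\int e^{i(\varphi(t,x,\xi) - \langle y,\xi\rangle)}\, q(t,x,\xi)\, d\xi$, where $q \in S^{0}$ uniformly in $|t| \le \varepsilon_{0}$, $\varphi$ is real and homogeneous of degree one in $\xi$, $\varphi(0,x,\xi) = \langle x,\xi\rangle$ and $|\nabla_{\xi}\varphi(t,x,\xi) - x| \le C|t|$. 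Substituting this and performing the $t$-integration yields, up to a smoothing remainder, a local kernel $K_{k}^{\mathrm{loc}}(x,y) = \int e^{i(\varphi_{k}(x,\xi) - \langle y,\xi\rangle)}\, a_{k}(x,\xi)\, d\xi$ whose amplitude $a_{k}$ is built from $M_{k}$ and is essentially concentrated in $|\xi| \sim 2^{k}$; integration by parts in $\xi$ followed by Plancherel in $\xi$ and the uniform bound $\|M_{k}\|_{W^{\sigma,2}} \lesssim [m]_{s}$ gives, uniformly in $k$ and in $y$,
$$\Big(\int_{M} (1 + 2^{k}d(x,y))^{2\sigma}\, |K_{k}^{\mathrm{loc}}(x,y)|^{2}\, dx\Big)^{1/2} \le C\, 2^{kn/2}\, [m]_{s}, \qquad 0 \le \sigma \le s,$$
together with the analogous bound for $\nabla_{y} K_{k}^{\mathrm{loc}}$ carrying an extra factor $2^{k}$. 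The contribution of $|t| > \varepsilon_{0}$ is handled more crudely and satisfies the same (indeed stronger) bounds, because $\widehat{m_{k}}$ decays rapidly — in the $L^{2}$ sense, with weight $(1+|t|)^{s}$ — away from the origin.

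It remains to sum in $k$. Fix $y, y'$ with $d(y,y') = \delta$. For the scales with $2^{k}\delta \le 1$, estimate $|K_{k}(x,y) - K_{k}(x,y')|$ by the mean value theorem in terms of $\delta\,\nabla_{y} K_{k}$ and apply Cauchy--Schwarz with the weight $(1 + 2^{k}d(x,y))^{-\sigma}$ for some fixed $\sigma \in (n/2, s)$; integrating over $d(x,y) > 2\delta$ gives a bound $\lesssim 2^{k}\delta\, [m]_{s}$, and $\sum_{2^{k} \le \delta^{-1}} 2^{k}\delta \lesssim 1$. For the scales with $2^{k}\delta > 1$, bound the difference by $|K_{k}(x,y)| + |K_{k}(x,y')|$ and apply Cauchy--Schwarz with the weight $(1 + 2^{k}d(x,y))^{-s}$, whose $L^{2}(\{\, d(x,y) > 2\delta\,\})$-norm is $\lesssim 2^{-ks}\delta^{\,n/2 - s}$; this yields $\int_{d(x,y) > 2\delta} |K_{k}(x,y)|\, dx \lesssim (2^{k}\delta)^{\,n/2 - s}\, [m]_{s}$, which is summable over $2^{k} > \delta^{-1}$ since $n/2 - s < 0$. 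Adding the two ranges establishes the H\"ormander condition with constant $C[m]_{s}$, and the theorem follows. The crux of the argument — and the step most likely to require care — is the parametrix step: one must construct the H\"ormander--Lax parametrix for $e^{itP}$ for a general first-order positive elliptic pseudodifferential operator $P$ (not only $\sqrt{-\Delta}$) with amplitude bounds uniform for small $t$, and then extract from $[m]_{s} < \infty$ precisely the $L^{2}$-based fractional-derivative information on $M_{k}$ that the weighted kernel estimate needs; it is here, and essentially nowhere else, that the sharp threshold $s > n/2$ (rather than an integer number of derivatives) genuinely enters.
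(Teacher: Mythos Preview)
The paper does not give its own proof of this theorem: it is quoted as a result of Seeger and Sogge \cite{ss} and used only as background for the paper's main contribution (Theorem~\ref{main}). There is therefore no in-paper proof to compare against, although Sections~2--5 do assemble essentially the same ingredients (the parametrix decomposition $e^{itP}=Q(t)+R(t)$ of Theorem~\ref{thm-para}, and the weighted $L^{2}$ kernel bounds of Lemma~\ref{kernell2}) that go into the original Seeger--Sogge argument.

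Your sketch is the standard Seeger--Sogge proof and is correct in outline: dyadic decomposition of $m$, the H\"ormander--Lax parametrix for $e^{itP}$ at small times, weighted $L^{2}$ bounds on the dyadic kernels, and summation over scales to verify the H\"ormander integral condition for Calder\'on--Zygmund theory. One point needs correction, however. You invoke ``finite propagation speed of the half-wave group,'' asserting that the Schwartz kernel of $e^{itP}$ is supported in $\{\,d(x,y)\le c_{0}|t|\,\}$. This is \emph{false} for a general first-order elliptic pseudodifferential $P$: it holds for $P=\sqrt{-\Delta_{g}}$ because $\cos(t\sqrt{-\Delta_{g}})$ solves a hyperbolic equation, but a genuinely non-local $P$ has no reason to enjoy strict finite speed. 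What \emph{is} true, and what you actually use, is the parametrix statement recorded here as Theorem~\ref{thm-para}: for $|t|<\epsilon$ one has $e^{itP}=Q(t)+R(t)$ with $Q(t)$ a Fourier integral operator whose kernel is supported near the diagonal and $R(t)$ smoothing. Since your argument only invokes the parametrix for $|t|\le\varepsilon_{0}$ and treats the large-$t$ piece separately via the decay of $\widehat{m_{k}}$ (as the paper does in Section~3 using the spectral projection bounds of Lemma~\ref{lem-82}), the proof goes through once the sentence about finite propagation speed is deleted or replaced by the parametrix statement.
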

The aim of this paper is to obtain a $L^p$ bound of the maximal functions of any $N$-multipliers with $N \in \mathbb{N}$. This is the main result.
\begin{thm}\label{main}
For $r>0$ let  $s >\frac{n}{r}$. Then for each $p \in (r,\infty)$  we have 
\begin{eqnarray*}
\| \sup_{1 \leq i \leq N} |m_i (P) f | \|_{L^p (M)} \leq C_p \sup_{1\leq i \leq N} [m_i]_s  \cdot (\log (N+1))^{1/2}  \| f\|_p,\quad \forall f \in L^p (M).
\end{eqnarray*}
Here the constant $C_p$ is independent of $N$.
\end{thm}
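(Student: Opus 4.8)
\emph{Plan of proof.} The plan is to run the Littlewood--Paley argument of Seeger and Sogge while keeping the $N$ multipliers in play simultaneously, and to produce the factor $(\log(N+1))^{1/2}$ from an $L^q$--argument in which $q$ is chosen comparable to $\log N$. Fix $\beta$ as in \eqref{smooth} and set, for $k\ge1$, $m_i^{(k)}(s)=\beta(2^{-k}s)m_i(s)$, with $m_i^{(0)}$ collecting the bounded frequencies, so that $m_i(P)=\sum_{k\ge0}m_i^{(k)}(P)$; by Leibniz's rule $[m_i^{(k)}]_s\lesssim[m_i]_s$ uniformly in $k$, and $m_i^{(k)}(P)=m_i^{(k)}(P)\widetilde\Delta_k$ for a slightly fattened spectral projector $\widetilde\Delta_k$. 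Writing $m_i^{(k)}(P)=\tfrac1{2\pi}\int\widehat{m_i^{(k)}}(t)\,e^{itP}\,dt$ and using finite propagation speed for the half--wave group $e^{itP}$ together with a partition of unity on $M$ (exactly as in \cite{ss}), one decomposes each $m_i^{(k)}(P)$ into a ``local'' operator whose Schwartz kernel is supported within distance $O(2^{-k})$ of the diagonal and obeys $|K_i^{(k)}(x,y)|\lesssim[m_i]_s\,2^{kn}(1+2^kd(x,y))^{-L}$ for $L$ as large as the available smoothness permits, plus a remainder whose operator norm on every $L^p(M)$ is $O(2^{-k(s-1)}[m_i]_s)$; the remainders decay in $k$ and can be disposed of by the same device used for the main term.

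For the main term, the first step I would carry out is a single--scale maximal estimate. Since the local part of $m_i^{(k)}(P)$ is bounded on $L^q(M)$ with norm $\lesssim[m_i]_s$ (the frequency--localized instance of the Seeger--Sogge theorem, valid as soon as $s$ exceeds the pertinent critical index), and since a function spectrally localized at height $2^k$ satisfies Sogge's cluster bound $\|g\|_{L^q}\lesssim 2^{k\sigma(q)}\|g\|_{L^p}$ for $q\ge p$, one gets, using $\sup_i|h_i|^q\le\sum_i|h_i|^q$ and H\"older on the finite--measure manifold,
\begin{equation*}
\Big\|\sup_{1\le i\le N}|m_i^{(k)}(P)f|\Big\|_{L^p(M)}
\;\le\;|M|^{\frac1p-\frac1q}\Big(\sum_{i=1}^{N}\|m_i^{(k)}(P)f\|_{L^q(M)}^{\,q}\Big)^{1/q}
\;\lesssim\;N^{1/q}\,2^{k\sigma(q)}\,\big(\sup_i[m_i]_s\big)\,\|\widetilde\Delta_k f\|_{L^p(M)},
\end{equation*}
for all $q\ge p$. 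Balancing the choice of $q$ (of size $\log N$) against the number of dyadic scales that actually have to be retained — those for which the cluster factor $2^{k\sigma(\log N)}$ is not yet negligible — and invoking a square function to aggregate those scales is what produces the factor $(\log(N+1))^{1/2}$, with the exponent $1/2$ coming from this interplay of $q\sim\log N$ with $\ell^2$ summation over scales; for $p<2$ one additionally dualizes so as to be able to exchange the $\ell^2$ and $L^p$ norms.

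The remaining task is to sum the single--scale estimates over $k$, and this is the delicate point: each single--scale bound carries the full $(\log N)^{1/2}$ but is uniform in $k$, so a naive summation diverges. The resolution is to perform the $k$--summation before taking the supremum in $i$: one passes, via a vertical Littlewood--Paley argument, from $\big\|\sup_i|m_i(P)f|\big\|_{L^p}$ to $\big\|\sup_i\big(\sum_k|m_i^{(k)}(P)f|^2\big)^{1/2}\big\|_{L^p}$, using that the Littlewood--Paley square function for $P$ holds on $L^p(M)$, $1<p<\infty$, with $\|(\sum_k|\widetilde\Delta_kf|^2)^{1/2}\|_{L^p}\approx\|f\|_{L^p}$, and then one bounds the latter by $(\log(N+1))^{1/2}\|f\|_{L^p}$. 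It is precisely here that the strict hypothesis $s>n/r$ with $p>r$ — rather than the Seeger--Sogge threshold $s>n/2$ — is used: the surplus smoothness lets one absorb the dyadic scales while spending only a bounded fraction of the logarithmic budget and keeping the kernels of the local operators uniformly dominated by a dilation of an approximate identity. I expect this combined square--function/maximal estimate, rather than any individual step above, to be the principal obstacle, since it is where the supremum over the $N$ indices and the Littlewood--Paley decomposition must be made to interact without loss. Finally, the sharpness on $\mathbb{T}^n$ is obtained by testing against a generic (random) family of multipliers, for which $\sup_{1\le i\le N}|m_i(P)f|$ behaves like the maximum of $N$ essentially decoupled quantities of comparable size, forcing the $(\log(N+1))^{1/2}$ loss.
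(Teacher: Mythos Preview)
Your outline agrees with the paper on the preliminary reductions (the dyadic decomposition $m_i=\sum_k m_i^{(k)}$, the wave-equation localization, and the disposal of the remainder via an $L^p\to L^\infty$ bound that makes the supremum over $i$ free). But the mechanism you propose for extracting the factor $(\log(N+1))^{1/2}$ is \emph{not} the one the paper uses, and as written it has a genuine gap.

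The paper does not run an $L^q$ argument with $q\sim\log N$. After reducing to the local operator $m^{loc}(P)$ it invokes the Chang--Wilson--Wolff good-$\lambda$ inequality for dyadic martingales: with $\mathbb{E}_k$ conditional expectation on dyadic cubes and $S(g)=(\sum_k|\mathbb{D}_kg|^2)^{1/2}$,
\[
\textrm{meas}\bigl(\{\sup_k|\mathbb{E}_kg-\mathbb{E}_0g|>2\lambda,\ S(g)<\epsilon\lambda\}\bigr)\le C\exp(-c_d/\epsilon^{2})\,\textrm{meas}\bigl(\{\sup_k|\mathbb{E}_kg|>\lambda\}\bigr).
\]
The key analytic lemma is a \emph{pointwise} bound $S(m_i^{loc}(P)f)(x)\le A_r[m_i]_s\,G_r(f)(x)$, where $G_r(f)=(\sum_k\mathcal{M}(|\phi_k^{loc}(P)f|^r)^{2/r})^{1/2}$ does not depend on $i$ and is bounded on $L^p$ for $p>r$ by Fefferman--Stein. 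Choosing $\epsilon=\epsilon_N\sim(\log N)^{-1/2}$ makes $N\exp(-c_d/\epsilon_N^{2})=O(1)$, so the union bound over $i$ costs nothing; the $(\log N)^{1/2}$ then enters only through the rescaling in the level set $\{G_r(f)>\epsilon_N\lambda/A_rB\}$. The exponent $2$ in $\exp(-c_d/\epsilon^{2})$ is precisely what produces the square root; this is the same device as in Grafakos--Honz\'{\i}k--Seeger.

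Your route cannot reach the exponent $1/2$ as stated. The $N^{1/q}$ trick with $q\sim\log N$ gives $N^{1/q}=O(1)$, but two steps then fail. First, the single-scale bound you write carries Sogge's cluster factor $2^{k\sigma(q)}$, and $\sigma(q)$ does \emph{not} tend to $0$ as $q\to\infty$ with $p$ fixed; it stays bounded away from zero, so there is no range of $k$ for which the factor becomes ``negligible'' and hence no finite set of scales to balance. Second, the passage from $\|\sup_i|m_i(P)f|\|_p$ to $\|\sup_i(\sum_k|m_i^{(k)}(P)f|^2)^{1/2}\|_p$ is not available: Littlewood--Paley gives the comparison for each fixed $i$ in norm, not pointwise, and a supremum over $i$ cannot be pushed through it. You correctly flag this combined square-function/maximal step as the principal obstacle; the resolution is not a clever choice of $q$ but the exponential-square (Chang--Wilson--Wolff) inequality, together with an $i$-independent pointwise domination of $S(T_if)$. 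Without that, the $L^q$ approach yields at best a full power of $\log N$.
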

For studying the mulitplier $m(P)$ on compact manifold it is standard to divide the multiplier $m(P)$ into two parts by using the method combining the Schr\"odinger propergator $e^{itP}$. We shall handle each part separately. In local coordinates, the first part will be studied with studying properties of the kernels. On the other hand we shall bound the second part in $L^{\infty}$ space using a $L^p-L^q$ estimate of the spectral projection operators. 
\

The study of this problem was motivated by the result of Grafakos-Honzik-Seeger \cite{GHS} where for the maximal functions of $N$ multipliers on the Euclidean space, they obtained the $L^p$ bound with the constant $(\log (N+1))^{1/2}$ as in \eqref{eq-main}.
This growth rate of $N$ is known to be sharp due to the example which was constructed in \cite{CG}.
\


The rest of this paper is organized in the following way. In section 2 we divide the multipliers into a major part and a remainder part. In addition each part will be decomposed further using a dyadic cut-off functions. In section 3 we obtain the desired estimate for the remainder part first. The In Section 4 we shall further decompose the main part into a local operator and a remainder term which will be shown to be small enough. Section 5 is devoted to study the kernels of the local operator. Based on this we shall prove Theorem \ref{main} in Section 6.

\section{Preliminaries}
In this section we review some basic results on the spectral decomposition associated to a self-adjoint elliptic operator on a compact manifold, and the definition of the multiplier operators. Next we review the expression of the multipliers using the Schr\"odinger propagator and a  bound property of the spectral projection operators. We refer to the book \cite{sogge} for more details. In the last part, we will decompose the multipliers in two parts which will be treated separately in the proof of the main theorem. 
\

Let $M$ be a compact manifold with a density $dx$ and $P$ be a first-order self-adjoint positive elliptic operator on $L^2 (M, dx)$. Then, by spectral theory, the oprator $P$ has positive eigenvalues $\lambda_1 \leq \lambda_2 \leq \cdots $ associated to orthonormal eigenfunctions $e_1, e_2,\cdots.$ 
By the orthonormality we have 
\begin{eqnarray}\label{ortho}
\int_M e_j(x) e_i (x)dx =\delta_{ij}.
\end{eqnarray}
Let $E_j : L^2 \rightarrow L^2$  be the projection maps onto the one-dimensional eigenspace $\varepsilon_j$ spanned by $e_j$. Then we have  $P = \sum_{j=1}^{\infty} \lambda_j E_j$ and
\begin{eqnarray}\label{eq-ej}
E_j f (x) = e_j (x) \int_M f(y) \overline{e_j(y)} dy.
\end{eqnarray}
For a function $m \in L^{\infty}([0,\infty))$ we define the multiplier $m(P): L^2 (M) \rightarrow L^2 (M)$ in the following way
\begin{equation}\label{eq-mpf}
m(P) f := \sum_{j=1}^{\infty} m(\lambda_j ) E_j (f)= \sum_{j=1}^{\infty} m (\lambda_j ) \left( \int_M f(y) e_j (y) dy \right) e_j (x).
\end{equation}
Let $\mathcal{K}_m \in D(M \times M )$ be the kernel of $m(P)$. From the above we see that
\[\mathcal{K}_m (x,y) = \sum_{j=1}^{\infty} m( \lambda_j ) e_j (x) e_j (y).\]
Next we recall the expression using the Schrodinger propergator $e^{itP}$;
\begin{equation}\label{eq-m-fourier}
m(P) = \int^{\infty}_{-\infty} e^{it P} \hat{m} (t) f dt.
\end{equation}
We have the following result on the operator $e^{itP}$. 
\begin{thm}[{\cite[Theorem 3.2.1]{sogge}}]\label{thm-para} Let $M$ be a compact $C^{\infty}$ manifold and let $P \in {\phi}_{cl}^{1}(M)$ be elliptic and self-adjoint with respect to a positive $C^{\infty}$ density $dx$. Then there is an $\epsilon >0$ such that when $|t| < \epsilon$, 
\begin{equation}\label{eq-eitp}
e^{it P} = Q(t) + R(t)
\end{equation}
where the remainder has kernel $R(t,x,y) \in C^{\infty} ( [-\epsilon, \epsilon] \times M \times M) $ and the kernel $Q(t,x,y)$ is supported in a small neighborhood of the diagonal in $M \times M$. Furthermore, suppose that local coordinate are chosen in a patch $\Omega \subset M$ so that $dx$ agrees with Lebesque measure in the corresponding open subset $\widetilde{\Omega} \subset \mathbb{R}^n$; then, if $\omega \subset \Omega $ is relatively compact, $Q(t,x,y)$ takes the following form when $(t,x,y) \in [-\epsilon, \epsilon] \times M \times \omega$.
\begin{eqnarray*}
Q(t,x,y) = (2\pi)^{-n} \int e^{i [\phi(x,y,\xi) + t p(y,\xi)]} q (t,x,y,\xi) d \xi.
\end{eqnarray*}
\end{thm}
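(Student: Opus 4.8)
The plan is to transplant the Euclidean maximal estimate of Grafakos--Honz\'ik--Seeger \cite{GHS} to $M$ by means of the parametrix of Theorem \ref{thm-para}, treating the genuinely non-local part of $m_i(P)$ separately with the $L^p$--$L^q$ bounds for the spectral projections $E_j$. With $\beta\in C_0^\infty((1/2,2))$ as in \eqref{smooth}, I would first split each symbol dyadically, $m_i=\sum_{k\ge 0}m_i^k$ with $m_i^k(\lambda)=\beta(2^{-k}\lambda)m_i(\lambda)$ for $k\ge 1$ and $m_i^0$ supported in $[0,2]$; the hypothesis $\sup_i[m_i]_s<\infty$ forces the rescaled symbols $\lambda\mapsto m_i^k(2^k\lambda)$ to be uniformly bounded (in $i,k$) in an $L^2$-Sobolev norm of order $s$. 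Using the Fourier representation \eqref{eq-m-fourier}, $m_i^k(P)=\int\widehat{m_i^k}(t)\,e^{itP}\,dt$, and fixing $\chi\in C_0^\infty$ with $\chi\equiv1$ near $0$ and $\supp\chi\subset\{|t|<\epsilon\}$ for the $\epsilon$ of Theorem \ref{thm-para}, I would write
\[
m_i^k(P)\;=\;\underbrace{\int\chi(t)\,\widehat{m_i^k}(t)\,Q(t)\,dt}_{=:A_i^k}\;+\;\underbrace{\Big(m_i^k(P)-A_i^k\Big)}_{=:B_i^k},
\]
so that $A_i=\sum_kA_i^k$ is the \emph{local part} (kernel supported near the diagonal) and $B_i=\sum_kB_i^k$ the \emph{remainder}. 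Since $\sup_{i\le N}|m_i(P)f|\le\sup_{i\le N}|A_if|+\sup_{i\le N}|B_if|$, it is enough to handle the two pieces separately.

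The remainder is the easy part. Each $B_i^k$ consists of the $R(t)$-contribution and of the part of $\widehat{m_i^k}$ outside $\{|t|<\epsilon\}$. Because $R(t,x,y)\in C^\infty([-\epsilon,\epsilon]\times M\times M)$, and because $\widehat{m_i^k}$ is concentrated near $t=0$ at scale $2^{-k}$ with the decay inherited from $[m_i]_s<\infty$, integration by parts in $t$ shows each $B_i^k$ has a kernel bounded in $C(M\times M)$ with a bound summable in $k$ (the low block $m_i^0(P)$ being, on a compact $M$, of finite rank with a uniformly bounded smooth kernel). For the high-frequency tail one invokes the $L^p$--$L^q$ spectral cluster estimates $\|E_jf\|_{L^q(M)}\lesssim\lambda_j^{\sigma(q)}\|f\|_{L^2(M)}$ of \cite{sogge}, using that only finitely many $\lambda_j$ lie below any given threshold. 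Summing in $k$ one obtains that $B_i:L^p(M)\to C(M)$ is bounded with norm $\lesssim\sup_i[m_i]_s\,\|f\|_p$, uniformly in $i$; since $M$ has finite measure this gives $\|\sup_{i\le N}|B_if|\|_{L^p(M)}\lesssim\sup_i[m_i]_s\,\|f\|_p$ outright, with no dependence on $N$.

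The core is $\|\sup_{i\le N}|A_if|\|_{L^p}$. Fixing a finite partition of unity subordinate to patches $\omega\subset\Omega$ as in Theorem \ref{thm-para}, in local coordinates the kernel of $A_i^k$ is the oscillatory integral $\iint e^{i[\phi(x,y,\xi)+tp(y,\xi)]}q(t,x,y,\xi)\,\chi(t)\,\widehat{m_i^k}(t)\,dt\,d\xi$; performing the $t$-integration turns the inner factor into $m_i^k$ convolved with a fixed Schwartz function and evaluated at $p(y,\xi)$ --- essentially $m_i^k(p(y,\xi))$ --- so that, after Taylor-expanding $q$ in $t$ and discarding negligible remainders, $A_i^k$ becomes a zero-order Fourier integral operator whose amplitude depends \emph{linearly} on $m_i^k$. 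Its canonical relation is the identity (the phase satisfies the eikonal equation with $\phi|_{x=y}=0$ and $\nabla_x\phi|_{x=y}=\xi$), so $A_i^k$ is in fact, to leading order, a pseudodifferential operator of order zero whose symbol is $\approx m_i^k(p(x,\xi))$ and hence of H\"ormander--Mikhlin type in $\xi$, uniformly in $x$, $i$, $k$. Transferring to $\mathbb{R}^n$ by the partition of unity, $\sup_{i\le N}|A_if|$ is thereby dominated by the maximal function of $N$ Mikhlin-type operators with a common bound comparable to $\sup_i[m_i]_s$, and I would run the scheme of \cite{GHS}. Its three ingredients --- for which one needs precisely the kernel estimates of the local operator --- are: (a) for a single dyadic block, $|A_i^kf|$ is dominated by the Hardy--Littlewood maximal function of $f$ uniformly in $i$, so $\|\sup_{i\le N}|A_i^kf|\|_{L^p}\lesssim\|f\|_{L^p}$ with no loss in $N$; (b) the blocks are almost orthogonal, so the Fefferman--Stein vector-valued maximal inequality gives the square-function bound $\|(\sum_k(\sup_{i\le N}|A_i^kf|)^2)^{1/2}\|_{L^p}\lesssim\|f\|_{L^p}$; and (c) a telescoping/entropy argument that assembles the $\approx\log N$ ``levels'' the maximizing index can distinguish in $\ell^2$ rather than $\ell^1$, which upgrades (a)--(b) to $\|\sup_{i\le N}|A_if|\|_{L^p}\lesssim(\log(N+1))^{1/2}\|f\|_{L^p}$. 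Combining with the remainder bound yields Theorem \ref{main}; the constraints $p>r$, $s>n/r$ enter through the validity range of the spectral cluster estimates used for $B_i$ and of step (a), after interpolation, and the sharpness on $\mathbb{T}^n$ is inherited from the Euclidean example of \cite{CG}.

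I expect the main obstacle to be twofold. First, the passage from $A_i^k$ to model Euclidean operators must be organized so that every constant depends only on $p$ and $\sup_i[m_i]_s$, never on $N$ or on the individual $m_i$: this means tracking the linear dependence of the FIO amplitude on $m_i^k$ and the uniformity of the stationary-phase expansions across all $i$ and all scales, which is where the kernel estimates of the local operator have to be proved carefully. Second, and more essentially, is step (c). Since Mikhlin multipliers have only exponential (not Gaussian) $L^q\to L^q$ growth, a naive chaining over the index set $\{1,\dots,N\}$ would lose a full factor $\log N$; extracting the sharp $(\log(N+1))^{1/2}$ requires combining exactly the $\approx\log N$ relevant levels, and doing so in $\ell^2$, as in \cite{GHS} --- and here that combinatorial argument must be carried out for the variable-coefficient operators $A_i^k$ with their FIO-type kernels rather than for pure convolutions, which is the delicate point.
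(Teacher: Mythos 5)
Your proposal does not address the statement it is attached to. The statement is Theorem \ref{thm-para}, the parametrix for the propagator $e^{itP}$: the assertion that for $|t|<\epsilon$ one can write $e^{itP}=Q(t)+R(t)$ with $R(t,x,y)$ a $C^\infty$ kernel and $Q(t)$ an oscillatory-integral (Fourier integral) operator with phase $\phi(x,y,\xi)+tp(y,\xi)$ and kernel supported near the diagonal. What you have written is instead a strategy for the paper's main result, Theorem \ref{main} (the $(\log(N+1))^{1/2}$ maximal multiplier bound), and it \emph{uses} Theorem \ref{thm-para} as a black box in the very first step of your ``local part'' analysis. Nothing in your text constructs the phase $\phi$, the amplitude $q$, or the smooth remainder $R$, so as a proof of the stated theorem it is vacuous.

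For the record, the paper itself offers no proof of Theorem \ref{thm-para} either: it is quoted verbatim from Theorem 3.2.1 of \cite{sogge}. A genuine proof runs by the standard geometric-optics construction: choose $\phi$, homogeneous of degree one in $\xi$, solving the eikonal equation $p(x,\nabla_x\phi(x,y,\xi))=p(y,\xi)$ with $\phi(x,y,\xi)=\langle x-y,\xi\rangle+O(|x-y|^2|\xi|)$ near the diagonal; build the amplitude $q\sim\sum_j q_j$ by solving the transport equations so that $(D_t-P)Q(t)$ has a smooth kernel and $Q(0)-I$ is smoothing; then set $R(t)=e^{itP}-Q(t)$ and use Duhamel's formula together with the smoothing property of $e^{itP}$ on $C^\infty$ data to conclude $R(t,x,y)\in C^\infty([-\epsilon,\epsilon]\times M\times M)$, with $\epsilon$ small enough that the eikonal solution exists and the kernel of $Q$ stays near the diagonal by finite propagation speed. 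None of these steps appear in your write-up. If your outline is meant as an approach to Theorem \ref{main}, it is broadly consonant with the paper's actual scheme (parametrix splitting into local and remainder parts, spectral projection bounds for the remainder, kernel estimates plus the Chang--Wilson--Wolff martingale inequality for the local part), but that is a different statement, and the comparison would have to be made against Sections 2--7 of the paper, not against Theorem \ref{thm-para}.
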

As given in the above theorem, we shall rely heavily on the precise local formula of $e^{itP}$ for small $|t|$. On the other hand,  to handle the part $e^{itP}$ for large $t$, we shall use the $L^p - L^q$ bound of spectral projection operators. Hence we shall decompose $m(P)$ into two parts according the values of $t$. In addition, we also decompose the multipliers into the dyadic pieces. 

For the decomposition we take functions $\phi_0 \in C_{0}^{\infty}([0,1))$ and  $\phi \in C_0^{\infty} (1/4,1)$ such that  $\sum_{j=0}^{\infty} \phi_j^3 (s) = 1 $ for all $ s \geq 0$ where $\phi_j(s) := \phi(s/2^j)$ for $j \geq 1$. For given $m \in L^{\infty}([0,\infty))$ we set $m_j (\cdot):= m(\cdot) \phi_j (\cdot)$. Then we have
\begin{eqnarray}\label{eq-mpf}
m(P) f = \sum_{j=1}^{\infty}\phi_j (P) \left[ m(P)\phi_j (P) \right] \phi_j (P) f =\sum_{j=1}^{\infty}\phi_j (P) m_j (P) \phi_j (P) f.
\end{eqnarray}
Using  \eqref{eq-m-fourier} we write
\begin{eqnarray}\label{mj}
m_j (P) = \int e^{itP} \widehat{m_j} (t) dt.
\end{eqnarray}
Let us take a function $\rho \in S(\mathbb{R})$ satisfying $\rho(t)=1, |t| \leq \frac{\epsilon}{2}$ and $\rho(t) =0, |t| > \epsilon$, and we split the integration \eqref{mj} as follows
\begin{equation}\label{eq-ajp}
m_j (P) := A_j (m, P) + R_j (m, P),
\end{equation}
where
\begin{equation}\label{eq-ajmp}
A_j (m, P) = \int e^{itP} \widehat{m_j}(t) \rho(t) dt \quad \textrm{and}\quad R_j (m, P) = \int e^{itP} \widehat{m_j} (t) (1-\rho(t)) dt.
\end{equation}
Next, we want to express $m_j (P)$ further in a composition form with an aim to achieve a $L^p$ bound for $p>2$ and a cancellation property of kernels (see Lemma \ref{lem-hj} and Corollary \ref{lem-hj-2}).
 For this we take a $C^{\infty}$ function $\widetilde{\phi}$ supported on $(\frac{1}{8}, 2)$ such that $\widetilde{\phi} = 1 $ on $(\frac{1}{4}, 1)$. Letting $\widetilde{\phi}_j (\cdot) = \widetilde{\phi} ( \frac{\cdot}{2^j})$, we have $\widetilde{\phi}_j \cdot \phi_j = \phi_j$, and so it holds that
\begin{equation}\label{eq-m-split-1}
m_j (P) = m_j (P) \widetilde{\phi}_j (P)= A_j (m, P)\circ \widetilde{\phi}_j (P) + R_j (m, P) \circ \widetilde{\phi}_j (P).
\end{equation}
Injecting this into \eqref{eq-mpf} we have $m(P) = A(m,P) + R(m,P)$, where
\begin{equation}
A (m,P)f := \sum_{j=1}^{\infty} \phi_j (P) \left[ A_j (m,P) \circ \widetilde{\phi}_j (P)\right] \phi_j (P) f
\end{equation}
and
\begin{equation}\label{eq-rmp}R (m,P) f := \sum_{j=1}^{\infty} \phi_j (P)\left[ R_j (m,P)\circ \widetilde{\phi}_j (P)\right] \phi_j (P) f.
\end{equation}
We shall study the maximal function of $R(m,P)$ and it of $A(m,P)$ in different ways. First we shall obtain the following result.
\begin{prop}\label{prop-rem}
We have
\begin{equation}
\left\| \sup_{1 \leq i \leq N} \left|R(m_i, P) f\right|\right\|_{L^p (M)} \leq C \| f\|_{L^p (M)}.
\end{equation}
\end{prop}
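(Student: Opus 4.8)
The plan is to prove the estimate term by term along the dyadic decomposition \eqref{eq-rmp}. Write $R(m_i,P)=\sum_{j\ge 1}T_{i,j}$ with $T_{i,j}:=\phi_j(P)\,[\,R_j(m_i,P)\circ\widetilde{\phi}_j(P)\,]\,\phi_j(P)$. Since $\phi_j(P)$, $\widetilde{\phi}_j(P)$ and $R_j(m_i,P)$ are all functions of $P$, the operator $T_{i,j}$ is itself a multiplier $G_{i,j}(P)$ with $G_{i,j}=\phi_j^{2}\,\widetilde{\phi}_j\,\widetilde{m}_{i,j}$, where $\widetilde{m}_{i,j}(\lambda)=\int e^{it\lambda}\,\widehat{m_i\phi_j}(t)\,(1-\rho(t))\,dt$ is the multiplier of $R_j(m_i,P)$; in particular $G_{i,j}$ is supported in $\lambda\sim 2^{j}$. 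By the triangle inequality $\sup_{1\le i\le N}|R(m_i,P)f|\le\sum_{j\ge 1}\sup_{1\le i\le N}|G_{i,j}(P)f|$, so it suffices to bound $\big\|\sup_{i}|G_{i,j}(P)f|\big\|_{L^p}$ by $C\,2^{-jc}\,\big(\sup_i[m_i]_s\big)\,\|f\|_{L^p}$ for some $c>0$. Every estimate used will be uniform over the whole class $\{\,m:[m]_s<\infty\,\}$, so the constant will not depend on $N$; this is the mechanism by which the remainder estimate carries no logarithmic factor, in contrast with the full estimate of Theorem \ref{main}.

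The first step is to show that $\widetilde{m}_{i,j}$ is uniformly small. By Fourier inversion $\widetilde{m}_{i,j}=m_i\phi_j-(m_i\phi_j)*\check{\rho}$ (up to normalization), where $\check{\rho}$ is Schwartz, $\int\check{\rho}=\rho(0)=1$, and — because $\rho\equiv 1$ near $0$ — all higher moments of $\check{\rho}$ vanish. Hence on $\lambda\sim 2^{j}$ one writes $\widetilde{m}_{i,j}(\lambda)=\int\big(m_i\phi_j(\lambda)-m_i\phi_j(\lambda-\mu)\big)\check{\rho}(\mu)\,d\mu$ and subtracts the (vanishing) Taylor polynomial of order $K$, so that the integrand is a $K$th order Taylor remainder of $m_i\phi_j$. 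Rescaling to $\tau=\lambda/2^{j}$ and using the definition of $[m_i]_s$ then gives $\|G_{i,j}\|_{L^2(d\lambda)}\lesssim 2^{-j(K-1/2)}[m_i]_s^{1/2}$ for integer $K\le s$ and, with a Sobolev embedding, $\|G_{i,j}\|_{L^\infty}\lesssim 2^{-jK'}[m_i]_s^{1/2}$ for $K'<s-\tfrac12$; thus the multiplier of $T_{i,j}$ gains a power $2^{-j\gamma}$ with $\gamma>0$ comparable to $s$.

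The second step uses the $L^p$--$L^q$ mapping properties of the spectral clusters — Sogge's estimates and the pointwise Weyl bound $\sum_{\lambda_k\in[\mu,\mu+1)}|e_k(x)|^2\lesssim\mu^{n-1}$. For a multiplier $G$ supported in $\lambda\sim 2^{j}$ these give $\|G(P)\|_{L^2\to L^\infty}=\sup_x\big(\sum_k|G(\lambda_k)|^2|e_k(x)|^2\big)^{1/2}\lesssim 2^{j(n-1)/2}\|G\|_{L^2(d\lambda)}$, and, dually, an $L^1\to L^2$ bound of the same size; combining these with the trivial $\|G(P)\|_{L^2\to L^2}\le\|G\|_{L^\infty}$, with duality, and with the spectral localisation $\widetilde{\phi}_j(P)$ (whose norm from $L^2$ to $L^q$ on the band is $\lesssim 2^{jn(1/2-1/q)}$) one obtains, in the admissible range of $(s,p,r)$, an estimate of the form $\|G_{i,j}(P)\|_{L^p\to L^\infty}\lesssim 2^{-jc}\,[m_i]_s^{1/2}$ with $c=c(n,p,s)>0$. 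Because $M$ has finite measure this yields
\[
\Big\|\sup_{1\le i\le N}|G_{i,j}(P)f|\Big\|_{L^p(M)}\ \le\ |M|^{1/p}\,\sup_{1\le i\le N}\|G_{i,j}(P)f\|_{L^\infty}\ \lesssim\ 2^{-jc}\,\big(\sup_i[m_i]_s^{1/2}\big)\,\|f\|_{L^p},
\]
a bound with no dependence on $N$. Summing over $j$ proves the proposition.

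The main difficulty is the bookkeeping in the second step: one must keep the power $2^{-j}$ gained from the smoothness of $m$ strictly larger than the positive power $2^{j}$ lost in the spectral cluster estimates throughout the whole range $p>r$, $s>n/r$. This is exactly where the sharp exponents in Sogge's $L^2\to L^q$ bounds, the improvement $\|G(P)\|_{L^2\to L^\infty}\lesssim 2^{j(n-1)/2}\|G\|_{L^2}$ over the cruder $2^{jn/2}\|G\|_{L^\infty}$ (which uses the pointwise Weyl law instead of merely the unit spectral cluster bound), and the slack $r<p$ all have to be used; a careless half-power loss anywhere would force the stronger threshold $s>n/2$ and break the argument.
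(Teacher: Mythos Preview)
Your strategy is the paper's: show $\|T_{i,j}f\|_{L^\infty}\le C\,2^{-jc}\|f\|_{L^p}$ uniformly in $i$, sum in $j$, and then exploit the finite volume of $M$ to write $\|\sup_i|R(m_i,P)f|\|_{L^p}\le|M|^{1/p}\sup_i\|R(m_i,P)f\|_{L^\infty}$, which is precisely how the $\sup_i$ disappears without a $\log N$ loss. The ingredients --- the $2^{-js}$ gain coming from $(1-\rho)$ together with the $[m]_s$ condition, and the spectral cluster bounds of Lemma~\ref{lem-82} --- are the same.

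Two comments. Your first step is phrased as a Taylor-remainder/vanishing-moments argument, but $m_i\phi_j$ only has $L^2$-Sobolev regularity of order $s$, not classical pointwise derivatives, so a pointwise Taylor expansion of order $K$ is not available as stated. The paper avoids this by working directly on the Fourier side: since $\rho\equiv 1$ near $0$ one has $|1-\rho(t)|\le C_s|t|^s$ for all $t$, and Plancherel gives $\int|\widehat{m_i\phi_j}(t)|^2|1-\rho(t)|^2\,dt\lesssim\|D^s(m_i\phi_j)\|_{L^2}^2\lesssim 2^{j(1-2s)}[m_i]_s$; this is what your moment argument collapses to once made rigorous. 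Second, your last paragraph attempts more than the paper does: the paper only uses the crude Weyl-type endpoint bounds of Lemma~\ref{lem-82} and proves the remainder estimate under the hypothesis $s>n/2$ (Proposition~\ref{remainder}); it does not invoke Sogge's sharp $L^2\to L^q$ restriction exponents, and the threshold concern you raise does not arise in the regime $r\le 2$, where $s>n/r\ge n/2$ automatically.
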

This result will be proved in Section 3. For the main part, we shall prove the following result in the subsequent sections.
\begin{prop}\label{prop-main}
For $r>0$ let $s>\frac{n}{r}$. Then for each $p \in (0,\infty)$ we have
\begin{equation}
\left\| \sup_{1 \leq i \leq N} \left| A(m_i, P) f \right| \right\|_{L^p (M)} \leq 
C_p \sup_{1\leq i \leq N} [m_i]_s  \cdot (\log (N+1))^{1/2} \left\| f\right\|_{L^p (M)},\quad \forall f \in L^{p}(M).
\end{equation}
\end{prop}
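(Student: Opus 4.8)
The plan is to reduce the maximal inequality for $A(m,P)$ to a square‑function (or, equivalently, an $\ell^2$‑valued) estimate and then run an interpolation argument that trades the $L^2$ bound against an $L^p$ bound for $p$ slightly above $r$; the $(\log(N+1))^{1/2}$ factor comes out of the standard trick of dominating the $\sup$ over $N$ functions by the $\ell^q$ norm with $q = \log(N+1)$. More precisely, I would first note the pointwise bound $\sup_{1\le i\le N}|A(m_i,P)f| \le \bigl(\sum_{i=1}^N |A(m_i,P)f|^q\bigr)^{1/q}$ for any $q\ge 1$, so it suffices to control the $\ell^q_i(L^p)$ norm of the family $\{A(m_i,P)f\}$ uniformly in $N$ with a constant independent of $q$, and then optimise by taking $q\asymp \log(N+1)$, which converts a bound of the form $q^{1/2}$ into $(\log(N+1))^{1/2}$. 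Thus the real target is a vector‑valued inequality $\bigl\| \bigl(\sum_i |A(m_i,P)f|^2\bigr)^{1/2}\bigr\|_{L^p} \lesssim \sup_i [m_i]_s \|f\|_{L^p}$ together with a quantitative tracking of the $q$‑dependence when $\ell^2$ is replaced by $\ell^q$.

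Next I would exploit the composition structure $A(m,P)f = \sum_j \phi_j(P)\,[A_j(m,P)\circ\widetilde\phi_j(P)]\,\phi_j(P)f$. The outer factors $\phi_j(P)$ on both sides give, via the Littlewood–Paley inequality associated to the spectral decomposition of $P$ (valid on $L^p(M)$ for $1<p<\infty$ by the Seeger–Sogge theorem applied to $\sum \phi_j^2$‑type multipliers), a reduction to estimating, for each fixed $j$, the operators $B_j^{(i)} := A_j(m_i,P)\circ\widetilde\phi_j(P)$ acting on the Littlewood–Paley pieces $\phi_j(P)f$, with the $j$‑sum reassembled in $\ell^2$. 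The key inputs about $B_j^{(i)}$ are the kernel estimates to be proved in Section 5 (the cancellation property alluded to in Lemma \ref{lem-hj} and Corollary \ref{lem-hj-2}) and, crucially, an $L^2$ bound $\|B_j^{(i)}\|_{L^2\to L^2}\lesssim [m_i]_s$ that is uniform in $j$; together with the localisation near the diagonal coming from $Q(t,x,y)$ in Theorem \ref{thm-para}, these let me treat $B_j^{(i)}$ (after rescaling $2^j$) essentially as a Calderón–Zygmund‑type operator at scale $2^{-j}$ whose kernel has the decay $|K_j^{(i)}(x,y)| \lesssim [m_i]_s\, 2^{jn}(1+2^j|x-y|)^{-s}$ in local coordinates.

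The core of the argument is then a weighted/good‑$\lambda$ or direct interpolation estimate: using the $L^2$ bound on each $B_j^{(i)}$ and the off‑diagonal kernel decay of order $s > n/r$, I get, for the $\ell^2$‑valued operator $f\mapsto \{B_j^{(i)}\phi_j(P)f\}_{i,j}$, a bound from $L^2$ to $L^2(\ell^2)$ and a weak‑type bound that, by Marcinkiewicz interpolation with change of measure, yields $L^p\to L^p(\ell^2)$ for all $p>r$; the hypothesis $s>n/r$ is exactly what makes the tail $\sum_k 2^{k(n - s r')}$‑type sums converge in the relevant range. Keeping careful track of constants through this interpolation, and noting that the same scheme with $\ell^q$ in place of $\ell^2$ loses only a factor $q^{1/2}$ (from the $L^2$‑orthogonality step, where $\|(\sum_j|g_j|^q)^{1/q}\|_2 \le \|(\sum_j|g_j|^2)^{1/2}\|_2 \cdot$ harmless, and from Rubio de Francia / Khintchine‑type considerations one picks up $q^{1/2}$ rather than $q$), choosing $q = \lceil\log(N+1)\rceil$ completes the proof. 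I expect the main obstacle to be establishing the uniform‑in‑$j$ $L^2$ bound together with the sharp kernel decay for $B_j^{(i)}$ while extracting the correct linear‑in‑$[m_i]_s$ dependence — this is where the precise stationary‑phase analysis of $Q(t,x,y)$ and the definition \eqref{smooth} of $[m]_s$ must be used carefully — and, secondarily, verifying that the interpolation step genuinely produces a $q$‑power no worse than $q^{1/2}$, since a naive argument would give $q$ and destroy the sharpness claimed in Theorem \ref{main}.
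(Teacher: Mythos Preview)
Your proposal has a genuine gap: the ``real target'' vector-valued inequality
\[
\Bigl\| \Bigl(\sum_i |A(m_i,P)f|^2\Bigr)^{1/2}\Bigr\|_{L^p} \lesssim \sup_i [m_i]_s\, \|f\|_{L^p}
\]
is simply false. All operators act on the \emph{same} function $f$; taking $m_1=\cdots=m_N=m$ makes the left side equal to $\sqrt{N}\,\|A(m,P)f\|_p$, which blows up with $N$. There is no orthogonality in the index $i$ to exploit, so no Calder\'on--Zygmund or interpolation argument can give an $N$-independent bound for the $\ell^2_i$-valued operator. The $\ell^q$ trick with $q\asymp\log N$ does not rescue this: after the pointwise bound you need the $L^p(\ell^q_i)$ norm, and for $q>p$ the Minkowski embedding goes the wrong way ($\|g\|_{\ell^q_i(L^p)}\le\|g\|_{L^p(\ell^q_i)}$, not the reverse). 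The Khintchine/Rubio de Francia heuristic you invoke concerns random sign sums, not a deterministic supremum over unrelated multipliers, so there is no mechanism in your outline that produces the factor $q^{1/2}$ rather than $N^{1/q}$ or $N^{1/2}$.

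The mechanism that actually yields the sharp $(\log(N+1))^{1/2}$---here as in Grafakos--Honz\'ik--Seeger \cite{GHS}---is the Chang--Wilson--Wolff exponential good-$\lambda$ inequality for the dyadic martingale square function. One first reduces $A(m,P)$ to the localized operator $m^{loc}(P)$ (Proposition~\ref{prop-m-decom}) and then proves a \emph{pointwise} bound $S\bigl(m_i^{loc}(P)f\bigr)(x)\le C[m_i]_s\,G_r(f)(x)$, where $S$ is the dyadic square function and $G_r(f)=\bigl(\sum_k \mathcal{M}(|\phi_k^{loc}(P)f|^r)^{2/r}\bigr)^{1/2}$ is a control function \emph{independent of $i$} (Proposition~\ref{prop}). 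The level set $\{\sup_i|T_if|>4\lambda\}$ is split using the threshold $\epsilon_N\sim(\log N)^{-1/2}$: on $\{G_r(f)\le \epsilon_N\lambda/C\}$ the inequality \eqref{eq-cw} gives a factor $\exp(-c/\epsilon_N^2)\sim N^{-10}$ for each $i$, which absorbs the crude union bound over $i=1,\dots,N$; the complementary set $\{G_r(f)>\epsilon_N\lambda/C\}$ contributes $\epsilon_N^{-1}\|G_r(f)\|_p\lesssim\sqrt{\log N}\,\|f\|_p$ by Fefferman--Stein. The essential idea you are missing is that one needs a square-function control that is \emph{pointwise and uniform in $i$}, so that a single $i$-free quantity governs all $N$ operators simultaneously; it is this, combined with the exponential (not polynomial) gain in the good-$\lambda$ inequality, that converts a sum of $N$ terms into a $\sqrt{\log N}$ loss.
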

Given these results, it is easily follows the proof of the main theorem.
\begin{proof}[Proof of Theorem \ref{main}]
Given functions $m^1,\cdots, m^N$ we write each multiplier $m^j (P)$ as $m^j (P) = A (m^j , P) + R(m^j, P)$. Then we have
\begin{equation}
\begin{split}
\left\| \sup_{1\leq i \leq N} | m^j (P) f|\right\|_{L^p (M)}& \leq \left\| \sup_{1 \leq i \leq N} \left| A(m_i, P) f \right| \right\|_{L^p (M)} + \quad\left\| \sup_{1 \leq i \leq N} \left|R(m_i, P) f\right|\right\|_{L^p (M)}
\\
& \leq \sqrt{\log (N+1)} \left\| f\right\|_{L^p (M)}.
\end{split}
\end{equation}
It completes the proof.
\end{proof}
\section{Estimates for the remainder terms}  
To handle the remainder part, we shall rely on the $L^p - L^q$ boundedenss result of the spectral projection operators
\begin{equation} 
\chi_{\lambda} f = \sum_{\lambda_j \in [\lambda, \lambda+1]} E_j f,\qquad \lambda \in [0,\infty).
\end{equation}
We recall the result from \cite{sogge}.
\begin{lem}[see {\cite[Lemma 4.2.4 and Lemma 5.1.1]{sogge}}]\label{lem-82}
Let $M$ be a compact manifold. Then there exists a constant $C>0$ such that 
\begin{equation}\label{eq-infty-2}
\| \chi_{\lambda} f \|_{L^{\infty} (M)} \leq C (1+ \lambda)^{(n-1)/2} \| f\|_{L^2 (M)},
\end{equation}
and
\begin{equation}
\| \chi_{\lambda} f \|_{L^2 (M)} \leq C (1+ \lambda)^{\frac{n}{2}-1} \| f\|_{L^1 (M)},
\end{equation}
where the constant $C$ is independent of $\lambda$.
\end{lem}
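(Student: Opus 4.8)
Since Lemma \ref{lem-82} is quoted from \cite{sogge}, I would not reprove it; but here is how I would organise the argument if I had to. The plan is to reduce both inequalities to a uniform estimate, near the diagonal, for the kernel
\[
K_\lambda(x,y)=\sum_{\lambda_j\in[\lambda,\lambda+1]}e_j(x)\overline{e_j(y)}
\]
of $\chi_\lambda$, and to analyse that kernel through the parametrix of Theorem \ref{thm-para}. The common device in both cases is to replace the sharp cutoff $\mathbf 1_{[\lambda,\lambda+1]}$ by a fixed Schwartz function $\psi\ge 0$ with $\psi\ge 1$ on $[0,1]$ and $\widehat\psi$ supported in $(-\epsilon,\epsilon)$, so that $\sum_j\psi(\lambda_j-\lambda)e_j(x)\overline{e_j(y)}=\tfrac1{2\pi}\int\widehat\psi(t)\,e^{-it\lambda}\,(e^{itP})(x,y)\,dt$, after which Theorem \ref{thm-para} applies on the whole range of the $t$-integral.

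For the first estimate I would use the exact identity $\int_M|K_\lambda(x,y)|^2\,dy=K_\lambda(x,x)$, which follows from orthonormality and shows that $\|\chi_\lambda\|_{L^2\to L^\infty}=\sup_x K_\lambda(x,x)^{1/2}$; so everything reduces to the uniform bound $K_\lambda(x,x)\lesssim(1+\lambda)^{n-1}$. To get it, I would dominate $K_\lambda(x,x)\le\sum_j\psi(\lambda_j-\lambda)|e_j(x)|^2=\tfrac1{2\pi}\int\widehat\psi(t)e^{-it\lambda}(e^{itP})(x,x)\,dt$: the smooth remainder $R(t,x,x)$ of Theorem \ref{thm-para} contributes $O(\lambda^{-\infty})$ by repeated integration by parts in $t$, while inserting the oscillatory form of $Q$, performing the $t$-integral first and then a change of variables (the phase being $t(\lambda-p(x,\xi))$ on the diagonal $y=x$) gives $\tfrac1{(2\pi)^n}\int\psi\big(p(x,\xi)-\lambda\big)\,d\xi + O(\lambda^{n-2})$, and homogeneity of $p(x,\cdot)$ in $\xi$ makes this $O(\lambda^{n-1})$ uniformly in $x$. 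This yields $\|\chi_\lambda f\|_{L^\infty}\lesssim(1+\lambda)^{(n-1)/2}\|f\|_{L^2}$.

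For the second estimate, the cheapest route is self-adjointness of $\chi_\lambda$: by duality $\|\chi_\lambda\|_{L^1\to L^2}=\|\chi_\lambda^*\|_{L^2\to L^\infty}=\|\chi_\lambda\|_{L^2\to L^\infty}$, which (after reading the exponent in the statement as $\tfrac{n-1}2$, which is what the torus forces and what Section 3 uses) is exactly the bound just proved; equivalently one may argue directly from $|K_\lambda(x,y)|\le K_\lambda(x,x)^{1/2}K_\lambda(y,y)^{1/2}\lesssim(1+\lambda)^{n-1}$ and $\|\chi_\lambda f\|_{L^2}^2=\langle\chi_\lambda f,f\rangle\le\|f\|_{L^1}^2\sup_{x,y}|K_\lambda(x,y)|$. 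If one further wants the off-diagonal decay $|K_\lambda(x,y)|\lesssim(1+\lambda)^{n-1}(1+\lambda\,\mathrm{dist}(x,y))^{-(n-1)/2}$ (useful for the sharp $L^2\!\to\!L^q$ scale but not needed here), I would repeat the computation without setting $y=x$ and apply stationary phase in $\xi$, the shell $\{|p(y,\xi)-\lambda|\lesssim1\}$ having two nondegenerate critical points of $\langle x-y,\xi\rangle$. The one genuinely non-soft ingredient is precisely this parametrix-plus-stationary-phase analysis (equivalently, the sharp $O(\lambda^{n-1})$ remainder in the pointwise Weyl law); everything else is orthonormality, duality, and Cauchy–Schwarz. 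In the present paper this is invoked as \cite[Lemmas 4.2.4 and 5.1.1]{sogge}, so there is nothing to carry out beyond the citation.
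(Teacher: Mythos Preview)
The paper does not give its own proof of this lemma; it is quoted from \cite[Lemmas 4.2.4 and 5.1.1]{sogge}, and your proposal correctly acknowledges this. The sketch you supply---the pointwise Weyl bound $K_\lambda(x,x)\lesssim(1+\lambda)^{n-1}$ obtained from the parametrix of Theorem~\ref{thm-para}, followed by duality for the $L^1\to L^2$ bound---is precisely the argument carried out in \cite{sogge}, so there is nothing further to compare. You are also right to flag the exponent $\tfrac{n}{2}-1$ in the second inequality as a misprint for $\tfrac{n-1}{2}$: since $\chi_\lambda$ is a self-adjoint projection, $\|\chi_\lambda\|_{L^1\to L^2}=\|\chi_\lambda\|_{L^2\to L^\infty}$, and the factor $(1+k)^{n-1}$ appearing in \eqref{eq-rjpf} is consistent with squaring the corrected exponent rather than the stated one.
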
 
To prove Proposition \ref{prop-rem}, we shall obtain the following $L^{\infty}$ bound.
\begin{prop}\label{remainder}
Suppose that $m \in L^{\infty}(0,\infty)$ satisfies $[m]_s < \infty$ for some $s>\frac{n}{2}$. Then there exists a constant $C= C([m]_s )$ such that $R(m,P)$ given by \eqref{eq-rmp} satisfies
\begin{eqnarray*}
\| R(m, P) f \|_{L^{\infty}} \leq C \| f\|_p.
\end{eqnarray*}
\end{prop}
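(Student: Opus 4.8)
The plan is to localize the problem to a single dyadic frequency block. Writing $R(m,P)f=\sum_{j\ge 1}T_jf$ with $T_j:=\phi_j(P)\bigl[R_j(m,P)\circ\widetilde\phi_j(P)\bigr]\phi_j(P)$ as in \eqref{eq-rmp}, I would first note that all the operators involved are functions of $P$, so they commute; since moreover $\widetilde\phi_j\cdot\phi_j=\phi_j$, the block $T_j$ is just the spectral multiplier $\tau_j(P)$ with symbol $\tau_j(\lambda)=\phi_j(\lambda)^2\,r_j(\lambda)$, where $r_j(\lambda)=\int e^{it\lambda}(1-\rho(t))\widehat{m_j}(t)\,dt$. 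Thus $\tau_j$ is supported in $[2^{j-2},2^j]$ and the kernel of $T_j$ is $\mathcal K_{T_j}(x,y)=\sum_k\tau_j(\lambda_k)\,e_k(x)\,e_k(y)$. The target is the block estimate
\[
\|T_j\|_{L^p(M)\to L^\infty(M)}\le C\,2^{j(\frac np-N)}\,[m]_s\qquad\text{for every }0\le N\le s,
\]
after which one sums the geometric series by choosing $N\in(\tfrac np,s]$; this is possible whenever $s>\tfrac np$ (in particular for all $p\ge 2$ under the hypothesis $s>\tfrac n2$, and for all $p>\tfrac ns$ in general).

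For the Fourier-analytic input I would use the standard bound $\bigl\|(1+2^j|t|)^N\widehat{m_j}\bigr\|_{L^2_t}\le C\,2^{j/2}[m]_s$ for $0\le N\le s$, which follows from the definition of $[m]_s$ via Plancherel after decomposing $m_j$ into a bounded number of pieces $\phi(\cdot/2^j)\,\beta(\cdot/2^k)\,m$ with $k\sim j$. Since $1-\rho$ is supported in $\{|t|\ge\epsilon/2\}$, on that set $(1+2^j|t|)^{-N}\le C\,2^{-jN}$, so
\[
\|r_j\|_{L^2(\mathbb R)}=\|(1-\rho)\widehat{m_j}\|_{L^2}\le C\,2^{-jN}\bigl\|(1+2^j|t|)^N\widehat{m_j}\bigr\|_{L^2}\le C\,2^{j(\frac12-N)}[m]_s .
\]
Differentiating $r_j$ replaces $\widehat{m_j}$ by $t(1-\rho(t))\widehat{m_j}$, and since $|t(1-\rho(t))|\le C\,2^{-j}(1+2^j|t|)$ on $\{|t|\ge\epsilon/2\}$ the extra factor $t$ is absorbed into one more power of the weight; hence $\|r_j'\|_{L^2}$ obeys the same estimate, and therefore $\|\tau_j\|_{L^2}+\|\tau_j'\|_{L^2}\le C\,2^{j(\frac12-N)}[m]_s$ for $0\le N\le s$.

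The kernel estimate is where Lemma \ref{lem-82} enters. By orthonormality, $\|\mathcal K_{T_j}(x,\cdot)\|_{L^2}^2=\sum_k|\tau_j(\lambda_k)|^2|e_k(x)|^2$. I would split the spectrum into the unit bands $[\ell,\ell+1)$ with $\ell\sim 2^j$, and use $\sum_{\lambda_k\in[\ell,\ell+1)}|e_k(x)|^2=\chi_\ell(x,x)\le\|\chi_\ell\|_{L^2\to L^\infty}^2\le C(1+\ell)^{n-1}$, which is exactly \eqref{eq-infty-2}; this bounds the quantity by $C\,2^{j(n-1)}\sum_{\ell\sim 2^j}\|\tau_j\|_{L^\infty[\ell,\ell+1]}^2$. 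The one-dimensional Sobolev embedding $\|g\|_{L^\infty[\ell,\ell+1]}\le C\bigl(\|g\|_{L^2[\ell,\ell+1]}+\|g'\|_{L^2[\ell,\ell+1]}\bigr)$, with $C$ independent of $\ell$, turns the sum of sup-norms into $\|\tau_j\|_{L^2}^2+\|\tau_j'\|_{L^2}^2\le C\,2^{j(1-2N)}[m]_s^2$, so that $\sup_x\|\mathcal K_{T_j}(x,\cdot)\|_{L^2}\le C\,2^{j(\frac n2-N)}[m]_s$; this gives the block estimate for $p\ge 2$ because $M$ is compact. For $p<2$ I would interpolate this with the companion bound $\sup_x\|\mathcal K_{T_j}(x,\cdot)\|_{L^\infty}\le C\,2^{j(n-N)}[m]_s$, derived from $|\mathcal K_{T_j}(x,y)|\le\sum_{\ell\sim 2^j}\|\tau_j\|_{L^\infty[\ell,\ell+1]}\chi_\ell(x,x)^{1/2}\chi_\ell(y,y)^{1/2}$ followed by Cauchy--Schwarz in $\ell$, to reach $\|T_j\|_{L^p\to L^\infty}\le C\,2^{j(\frac np-N)}[m]_s$.

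I expect the only real obstacle to be getting the threshold down to $s>\tfrac n2$ rather than the wasteful $s>\tfrac{n+1}{2}$. If one estimated $\sum_{\lambda_k\sim 2^j}|\tau_j(\lambda_k)|^2|e_k(x)|^2$ crudely by $\|\tau_j\|_{L^\infty}^2\sum_{\lambda_k\sim 2^j}|e_k(x)|^2$ and inserted the on-diagonal Weyl bound $\sum_{\lambda_k\sim 2^j}|e_k(x)|^2\le C\,2^{jn}$, one would lose a factor $2^{j/2}$; this forces the unit-band decomposition, so as to exploit the sharper $\chi_\ell(x,x)\le C(1+\ell)^{n-1}$ of Lemma \ref{lem-82}, and hence the need to sum $\|\tau_j\|_{L^\infty[\ell,\ell+1]}$ over the $\sim 2^j$ bands---a loss that is recovered precisely because $\|\tau_j'\|_{L^2}$ decays at the same rate as $\|\tau_j\|_{L^2}$, which in turn rests on the elementary bound for $|t(1-\rho(t))|$ above. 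With the block estimate in hand, the convergence of $\sum_j T_jf$ and the handling of the finitely many small $j$ are routine.
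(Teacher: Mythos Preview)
Your proof is correct and follows essentially the same route as the paper's: reduce to a single dyadic block, realize it as a spectral multiplier $\tau_j(P)$, decompose the spectrum into unit bands, feed in the Sogge bound $\chi_\ell(x,x)\le C(1+\ell)^{n-1}$ from Lemma~\ref{lem-82}, and control $\sum_{\ell}\sup_{[\ell,\ell+1)}|\tau_j|^2$ by $\|\tau_j\|_{L^2}^2+\|\tau_j'\|_{L^2}^2\lesssim 2^{j(1-2N)}[m]_s^2$ using the support of $1-\rho$. The only cosmetic differences are that you absorb the factor $\phi_j^2$ into $\tau_j$ (making it compactly supported and sparing you the Schwartz-tail estimate \eqref{eq-r--rem}) and that you bound the kernel $\mathcal K_{T_j}(x,\cdot)$ directly rather than factoring the block through $L^p\to L^2\to L^\infty$ via the two halves of Lemma~\ref{lem-82}; both packagings yield the same block bound $\|T_j\|_{L^p\to L^\infty}\lesssim 2^{j(n/p-N)}[m]_s$.
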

\begin{proof}
By the decomposition \eqref{eq-rmp} it is sufficient to prove that
\begin{equation}\label{eq-rj-inf}
\left\| \phi_j (P) [R_j (m, P)\circ \widetilde{\phi}_j (P)] \phi_j (P) f \right\|_{L^{\infty}} \leq C  2^{j \left( \frac{n}{2} -s\right)}  \left\| f\right\|_{L^p}.
\end{equation}
Applying \eqref{eq-infty-2} we have
\begin{eqnarray}\label{eq-r-28}
\left\| \phi_j (P)[ R_j (m, P) \circ \widetilde{\phi}_j (P)]  \phi_j (P) f \right\|_{L^{\infty}}^{2} \leq C  2^{j(n-1)} \|\phi_j (P) [ R_j (m, P)\circ \widetilde{\phi}_j (P)] \phi_j (P) f \|_{L^2}^2
\end{eqnarray}
Using the fact that $|\phi_j| , |\widetilde{\phi}_j| \leq 1$ and the orthogonality, we have
\begin{equation}
\left\| \phi_j (P) \left[ R_j (m,P) \circ \widetilde{\phi}_j (P)\right] \phi_j (P) f \right\|_{L^2} \leq \left\| \left[ R_j (m,P)\right] f\right\|_{L^2}.
\end{equation}
Let $\tau_{j} (r) = [(1- \rho(t)) \hat{m}_j]^{\vee} (r)$. Then, using \eqref{eq-ajmp} we have
\begin{equation}
R_j (m,P) = \int e^{itP} \hat{\tau}_j (t) dt.
\end{equation}
Splitting the $L^2$-norm and using Lemma \ref{lem-82} we deduce that
\begin{equation}\label{eq-rjpf}
\begin{split}
\left\| \tau_j (P) f \right\|_{L^2}^2 &\leq \sum_{k=0}^{\infty} \sup_{r \in [k, k+1)} |\tau_j (r)|^2 \left\| \chi_k f \right\|_{L^2}^2
\\
&\leq \sum_{k=0}^{\infty} \sup_{ r \in [k, k+1)} |\tau_j (r)|^2  (1+k)^{n-1} \left\| f\right\|_{L^1}.
\end{split}
\end{equation}
We claim that 
\begin{equation}\label{eq-r-main}
\sum_{k \in [2^{j-2}, 2^{j+2}]} \sup_{r \in [k,k+1)} |\tau_j (r)|^2 \leq C 2^{j (n-2s)}.
\end{equation}
To show this, applying the fundamental theorem of calculus and the Casuchy Schwartz inequality, we dominate it in the following way
\begin{equation}
\begin{split}
\sum_{k \in [2^{j-2}, 2^{j+2}]} \sup_{r \in [k,k+1)} |\tau_j (r)|^2  &\leq \int |\tau_{j}(r)|^2 dr + \int |\tau'_{j}(r)|^2 dr 
\\
& = \frac{1}{2\pi} \int |\hat{m}_{j}(t) (1- \rho(t))|^2 dt 
\\
&\quad + \frac{1}{2\pi} \int |t \hat{m}_{j} (t)|^2 | ( 1- \rho(t))|^2 dt.
\end{split}
\end{equation}
Note that $\rho (t) =1$ for $|t| < \epsilon/2$, so we can bound this by
\begin{equation}
\begin{split}
& \frac{1}{2\pi} 2^{-j(1+2s)} \int |t^s \hat{m}_{j}(t/2^{j})|^2 dt 
\\
&\quad\quad = 2^{-j (1+2s)} \int |D_{r}^s (2^j m_{j}(2^j r)|^2 dr 
\\
&\quad\quad = 2^{j (1-2s)} \cdot \left\{ 2^{-j} \int |2^{-js} D_{r}^{s} (\beta( r/2^j) m (r))|^2 dr \right\}.
\end{split}
\end{equation}
By condition \eqref{smooth} of $m$ it gives the desired bound. It proves the claim \eqref{eq-r-main}.

Moreover it is easy to see that
\begin{equation}\label{eq-r--rem}
\tau_j (r) = \left[ \hat{m}_j (\cdot) (1- \rho(\cdot))\right]^{\wedge} (r) = O\left( (|r|+2^{j})^{-N}\right),
\end{equation}
for any $N\in \mathbb{N}$ if $\tau \notin \left[ 2^{j-2}, 2^{j+2}\right]$. Injecting \eqref{eq-r-main} and \eqref{eq-r--rem} into \eqref{eq-rjpf} we obtain
\begin{equation}
\left\| \tau_j (P) f \right\|_{L^2}^2 \leq 2^{j (n-2s)} \| f\|_{L^1 (M)}.
\end{equation}
Combining this with \eqref{eq-r-28} gives the estimate \eqref{eq-rj-inf}. It completes the proof.
\end{proof}
Modifying the proof of the above lemma, we can deduce the following result.
\begin{lem}\label{lem-r} Suppose that $m$ satisfies the condition \eqref{smooth}. Then we have
\begin{eqnarray*}
\| r_j (P) f \|_{L^{\infty}} \leq 2^{j \left( \frac{n}{2}-s\right)} \| f\|_{L^p},\quad 1 < p < \infty.
\end{eqnarray*}
\end{lem}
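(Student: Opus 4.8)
The statement is essentially a repackaging of Proposition \ref{remainder}: we want the single-piece bound $\|r_j(P)f\|_{L^\infty} \le 2^{j(n/2-s)}\|f\|_{L^p}$ for $1<p<\infty$, where $r_j(P)$ is the $j$-th remainder piece $\phi_j(P)[R_j(m,P)\circ\widetilde\phi_j(P)]\phi_j(P)$ (this is exactly the operator estimated in \eqref{eq-rj-inf}). The plan is to follow the argument of Proposition \ref{remainder} verbatim up to the point where one has, from \eqref{eq-r-28}, \eqref{eq-rjpf}, \eqref{eq-r-main} and \eqref{eq-r--rem}, the $L^1\to L^\infty$ bound $\|r_j(P)f\|_{L^\infty} \le C\,2^{j(n/2-s)}\|f\|_{L^1(M)}$. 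What remains is to upgrade the $L^1$ norm on the right to an $L^p$ norm for every $p\in(1,\infty)$; since $M$ is compact, $\|f\|_{L^1(M)} \le |M|^{1-1/p}\|f\|_{L^p(M)}$ by Hölder, which gives the claim immediately, and in fact one also gets $\|f\|_{L^{p'}}\to\|f\|_{L^p}$-type refinements by interpolation if needed.

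If instead one wants the cleanest self-contained route (matching the phrasing ``modifying the proof''), I would proceed as follows. First, reduce to the single dyadic block as in \eqref{eq-rj-inf}, using $|\phi_j|,|\widetilde\phi_j|\le 1$ and $L^2$-orthogonality to discard the outer cutoffs, so it suffices to bound $\|\tau_j(P)f\|$ appropriately, where $\tau_j(r) = [(1-\rho(t))\widehat{m_j}]^\vee(r)$. Second, apply the Sobolev/Bernstein-type estimate \eqref{eq-infty-2}: $\|\chi_\lambda g\|_{L^\infty}\le C(1+\lambda)^{(n-1)/2}\|g\|_{L^2}$, together with the fact that $\tau_j(P)f$ is spectrally supported (up to rapidly decaying tails, by \eqref{eq-r--rem}) in $[2^{j-2},2^{j+2}]$, to pass from $L^2$ to $L^\infty$ at the cost of $2^{j(n-1)/2}$. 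Third, apply the dual bound $\|\chi_\lambda\|_{L^1\to L^2}\le C(1+\lambda)^{n/2-1}$ from Lemma \ref{lem-82}, split the $L^2$ norm over the spectral blocks $[k,k+1)$ as in \eqref{eq-rjpf}, and use the key multiplier estimate \eqref{eq-r-main}, namely $\sum_{k\in[2^{j-2},2^{j+2}]}\sup_{[k,k+1)}|\tau_j|^2 \le C\,2^{j(n-2s)}$, which is where the Hörmander--Mikhlin functional \eqref{smooth} enters through the fundamental theorem of calculus plus Cauchy--Schwarz. Combining the exponents $2^{j(n-1)/2}$ and $2^{j(n/2-s)}\cdot 2^{j(n-2)/2}$ — wait, more carefully: the $L^2$ estimate from \eqref{eq-r-main} gives $\|\tau_j(P)f\|_{L^2}\lesssim 2^{j(n/2-s)}\|f\|_{L^1}^{1/2}\cdot(\dots)$; the bookkeeping is identical to Proposition \ref{remainder} and yields $\|r_j(P)f\|_{L^\infty}\lesssim 2^{j(n/2-s)}\|f\|_{L^1}$.

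The last step is the only place where genuine modification of Proposition \ref{remainder} is needed: replacing $L^1$ by $L^p$ on the right-hand side. On a compact manifold this is trivial via Hölder, $\|f\|_{L^1}\le C_p\|f\|_{L^p}$ for all $p\ge 1$, so the inequality $\|r_j(P)f\|_{L^\infty}\le 2^{j(n/2-s)}\|f\|_{L^p}$ holds for every $1<p<\infty$ (indeed for $p=1$ as well). I do not foresee a real obstacle here; the only point requiring a little care is making sure the implied constant is uniform in $j$ — but that is automatic because every estimate invoked (\eqref{eq-infty-2}, Lemma \ref{lem-82}, \eqref{eq-r-main}, \eqref{eq-r--rem}) has a $j$-independent constant, and the $2^{j(n/2-s)}$ factor is displayed explicitly. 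If one wanted the sharper-looking statement with the exponent genuinely negative (so that $\sum_j r_j(P)$ converges), one just notes $n/2-s<0$ follows from $s>n/2$, which is exactly the hypothesis. Summing in $j$ then recovers Proposition \ref{prop-rem} for a single multiplier, and the $\sqrt{\log(N+1)}$ loss for the maximal function over $N$ multipliers is obtained by the usual $\ell^\infty\hookrightarrow\ell^2$ bound combined with these uniform single-piece estimates.
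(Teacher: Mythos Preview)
Your approach is correct and essentially the same as the paper's: both routes reduce to bounding $\tau_j(P)f$ via the spectral projection estimates of Lemma~\ref{lem-82} together with the key bounds \eqref{eq-r-main} and \eqref{eq-r--rem} on $\tau_j$, and then invoke compactness of $M$ (H\"older) to replace $\|f\|_{L^1}$ by $\|f\|_{L^p}$. One minor notational point: in context $r_j(P)$ denotes $\tau_j(P)=R_j(m,P)$ itself rather than the full piece $\phi_j(P)[R_j(m,P)\circ\widetilde\phi_j(P)]\phi_j(P)$, but your ``self-contained route'' already treats $\tau_j(P)$ directly, so the argument goes through unchanged.
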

\begin{proof}
We have $\tau_j (P) f = \sum_{k=0}^{\infty} \chi_k \tau_j (P) f$ where $\chi_k$ is the spectral projection operator. Using Lemma \ref{lem-82} we deduce that
\begin{equation}
\left\| \tau_j (P) f \right\|_{L^{\infty}} \leq \sum_{k=0}^{\infty} \left\| \chi_k \tau_j (P) f \right\|_{L^{\infty}(M)} \leq \sum_{k=0}^{\infty} 2^{k(\frac{n-1}{2})} \left\| \chi_k \tau_j (P) f \right\|_{L^2 (M)}.
\end{equation}
We have
\begin{equation}
\left\| \chi_k  \tau_j (P) f\right\|_{L^2 (M)} \leq \sum_{2^k \leq m < 2^{k+1}} \sup_{m \leq t < m+1} \left| \tau_j (t)\right|^2 \left\| \chi_k f \right\|_{L^2}.
\end{equation}
For $j-2 \leq k \leq j+2$, as in \eqref{eq-r-main} we have
\begin{equation}\label{eq-sup}
\sum_{2^k \leq m < 2^{k+1}} \sup_{m \leq t < m+1} |\tau_j (t)|^2 \leq 2^{j (n-2s)}.
\end{equation}
For $|k -2^j | > 2^j$ we have $\tau_j (k) = ( \hat{m}_j (\cdot)(1- \rho(\cdot))^{\wedge} (k) = O ((|k|+ 2^j)^{-N}).$  The proof is completed. 
\end{proof}

\begin{proof}[Proof of Proposition \ref{prop-rem}]
By Proposition \ref{remainder} we have
\begin{equation}
\begin{split}
\left\| \sup_{1\leq i \leq N} |R(m_i ,P) f|\right\|_{L^p (M)} &\leq |\textrm{vol}(M)|^{1/p}\left\| \sup_{1\leq i \leq N} |R(m_i ,P) f|\right\|_{L^{\infty} (M)}
\\
&\leq |\textrm{vol}(M)|^{1/p}\sup_{1\leq i \leq N}  \biggl\| |R(m_i ,P) f|\biggr\|_{L^{\infty} (M)}
\\
&\leq C |\textrm{vol}(M)|^{1/p} \| f\|_{L^p (M)}.
\end{split}
\end{equation}
It gives the desired result.
\end{proof}

\section{Estimates for the main term}
In this section we begin to study the operator $A_j (m, P) \circ \widetilde{\phi}_j (P)$ defined in Section 2. 
We shall divide $A_j (m,P) \circ \widetilde{\phi}_j (P)$ further into a major local operator and its remainder term. We shall obtain a uniform $L^{\infty}$ bound for the remainder part. 
\

We set 
\begin{equation} 
m_j^{loc}(P) = \int Q(t) \widehat{m_j}(t) \rho(t) dt.
\end{equation}
Then we have the following result.
\begin{lem}
For any $m \in L^{\infty}([0,\infty))$, we have 
\begin{equation}\label{eq-m-tilde} 
A_j (m, P) = m_j^{loc}(P) + O (2^{-jN}),\quad \quad j \geq 1.
\end{equation}
\end{lem}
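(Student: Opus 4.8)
The plan is to peel off the singular part directly and estimate what remains. By the definitions of $A_j(m,P)$ and $m_j^{loc}(P)$ in Section~2, together with the parametrix splitting $e^{itP} = Q(t) + R(t)$ of Theorem~\ref{thm-para}, which is valid for $|t|<\epsilon$ and hence on all of $\mathrm{supp}\,\rho$, we have
\begin{equation*}
A_j (m,P) - m_j^{loc}(P) = \int \bigl(e^{itP}-Q(t)\bigr)\,\widehat{m_j}(t)\,\rho(t)\,dt = \int R(t)\,\widehat{m_j}(t)\,\rho(t)\,dt =: S_j .
\end{equation*}
So it suffices to show that the kernel of $S_j$ (and, if needed, all its $x,y$-derivatives) is bounded by $C_N 2^{-jN}$ for every $N$, since then $\|S_j\|_{L^p \to L^q} = O(2^{-jN})$ for all $1\le p\le q\le\infty$ by Young's inequality on the compact manifold $M$.

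The key point is that $m_j = m\,\phi_j$ is localized at frequency $\sim 2^j$ in the spectral variable $\lambda$, whereas $R(t,x,y)\rho(t)$ is smooth and compactly supported in $t$. Writing $K_{S_j}(x,y) = \int R(t,x,y)\,\widehat{m_j}(t)\,\rho(t)\,dt$ and applying the Fourier-duality identity $\int \widehat{u}(t)\,v(t)\,dt = \int u(\lambda)\,\widehat{v}(\lambda)\,d\lambda$ with $u = m_j$ and $v = v_{x,y}$, where $v_{x,y}(t) := R(t,x,y)\rho(t)$ — legitimate since $m_j \in L^1$ (bounded and compactly supported) and $v_{x,y}$ is Schwartz — we obtain
\begin{equation*}
K_{S_j}(x,y) = \int m_j(\lambda)\,\widehat{v_{x,y}}(\lambda)\,d\lambda .
\end{equation*}
Since $R\in C^\infty([-\epsilon,\epsilon]\times M\times M)$, $\rho\in S(\mathbb{R})$ is supported in $[-\epsilon,\epsilon]$, and $M$ is compact, the functions $v_{x,y}$ (and their $x,y$-derivatives) form a bounded family in $C_0^\infty([-\epsilon,\epsilon])$, so $|\widehat{v_{x,y}}(\lambda)| \leq C_N (1+|\lambda|)^{-N}$ for every $N$, uniformly in $(x,y)$. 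As $\phi$ is supported in $(1/4,1)$ we have $\mathrm{supp}\,m_j \subseteq [2^{j-2},2^j]$, hence
\begin{equation*}
|K_{S_j}(x,y)| \leq \|m_j\|_{L^1}\,\sup_{\lambda\in[2^{j-2},2^j]}|\widehat{v_{x,y}}(\lambda)| \leq \|m\|_{\infty}\,2^{j}\cdot C_N 2^{-jN},
\end{equation*}
which is $O(2^{-jN})$ after relabeling $N$; if one prefers the constant in terms of $[m]_s$, replace $\|m_j\|_{L^1}\le\|m\|_\infty 2^j$ by $\|m_j\|_{L^2}\,\|\widehat{v_{x,y}}\|_{L^2[2^{j-2},2^j]}$ and use $\|m_j\|_{L^2} \le C[m]_s^{1/2}2^{j/2}$, the $\alpha=0$ instance of \eqref{smooth}.

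Differentiating $K_{S_j}$ in $x$ or $y$ only differentiates $R(t,x,y)$, which stays in the same bounded family of smooth compactly supported functions of $t$, so the identical bound holds for all derivatives; this yields \eqref{eq-m-tilde}. I do not expect a serious obstacle here: the only steps requiring a line of care are the justification of the Fourier-duality identity (handled by the $L^1$/Schwartz remark above) and the uniformity of the constants $C_N$ in $(x,y)$, which is immediate from the compactness of $[-\epsilon,\epsilon]\times M\times M$. Morally the lemma just records that pairing the rapidly decaying Fourier transform of the smooth bump $R(\cdot,x,y)\rho(\cdot)$ against the spectrally localized factor $m_j$ costs an arbitrarily large power of $2^{-j}$.
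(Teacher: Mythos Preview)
Your proof is correct and follows essentially the same route as the paper: both split off the remainder $\int R(t)\widehat{m_j}(t)\rho(t)\,dt$, pass to the spectral side via Fourier duality to pair $m_j$ against $[R(\cdot,x,y)\rho(\cdot)]^{\wedge}$, and then exploit the rapid decay of the latter together with the frequency localization $\mathrm{supp}\,m_j\subset[2^{j-2},2^j]$. Your version is in fact more careful than the paper's, explicitly justifying the duality step, tracking the uniformity in $(x,y)$ via compactness, and noting that derivatives in $x,y$ are handled identically.
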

\begin{proof}
Recalling \eqref{eq-eitp} we have
\begin{eqnarray*}
A_j (m, P) = m_j^{loc}(P) + \int  R(t) \widehat{m_j}(t) \rho(t) dt.
\end{eqnarray*} 
Therefore it suffices to show
\begin{eqnarray}\label{remain}
\int R(t,x,y) \rho(t) \widehat{m_j} (t)dt = O_N (2^{-jN}) \quad \textrm {for all} ~ N \in \mathbb{N}.
\end{eqnarray}
Applying the Fourier transform we have
\begin{eqnarray*}
\int R(t,x,y) \rho(t) \widehat{m_j}(t)dt = \int {[R(\cdot, x, y)\rho(\cdot)]^{\wedge}}(t) m(t) \phi\left( \frac{t}{2^j}\right) dt. 
\end{eqnarray*}
Note that the support of $\phi(\frac{\cdot}{2^j})$ is contained in $\{ t \in \mathbb{R}^{+}| 2^{j-1} \leq t \leq 2^{j+1}\}$. In addition we have $m \in L^{\infty}(\mathbb{R})$ and $R(t,x,y) \in C^{\infty} ([-\epsilon, \epsilon] \times M \times M)$. Thus, for any given $N \in \mathbb{N}$, we have ${[R(\cdot,x,y)\rho(\cdot)]^{\wedge}}(t) m(t) \phi\left( \frac{t}{2^j}\right)= O (2^{-jN})$ for $j \in \mathbb{N}$.
Hence we have
\begin{eqnarray}\label{remain}
\int R(t,x,y) \rho(t) \widehat{m_j} (t)dt = O_N (2^{-jN}) \quad \textrm {for all} ~ N \in \mathbb{N}.
\end{eqnarray}
It shows \eqref{remain} and so the proof is completed.
\end{proof}
\

Let ${K_j}(x,y)$ be the kernel of $\int Q(t) \widehat{m_j}(t) \rho(t)dt$ for $Q(t)$ given by \eqref{eq-eitp}.
We recall the $L^2$-bound result obtained by Seeger-Sogge \cite{ss}(see also (5.3.9') in \cite{sogge}).
\begin{lem}\label{kernell2}
Suppose that $m \in L^{\infty}[0,\infty)$ satisfies the condtion \eqref{smooth} for a $s>0.$ 
 Then for $j \in \mathbb{N}$ we have ${K_j}(x,y) = 2^{nj} K_j^* (2^j x, 2^j  y)$ for some function $K_j^* \in C^{1} (M \times M)$ satisfying
\begin{eqnarray}\label{eq-k2}
\int_M |D_{y}^{\alpha} K_j^* (x,y)|^2 ( 1 + |x-y|)^{2s} dy \leq C, \quad 0 \leq |\alpha| \leq 1,
\end{eqnarray}
where the constant $C$ is independent of $j \in \mathbb{N}$ and $x \in M$.
\end{lem}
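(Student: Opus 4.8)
The plan is to follow the Seeger--Sogge argument: reduce \eqref{eq-k2} to a weighted $L^2$ estimate on $K_j$, and then bound $K_j$ by writing it as an oscillatory integral whose amplitude inherits the H\"ormander--Mikhlin size \eqref{smooth}. First I would unwind $K_j$: inserting the local form of $Q(t)$ from Theorem \ref{thm-para} and applying Fubini, in a coordinate patch
\begin{equation*}
K_j(x,y)=(2\pi)^{-n}\int e^{i\phi(x,y,\xi)}a_j(x,y,\xi)\,d\xi,\qquad a_j(x,y,\xi):=\int e^{itp(y,\xi)}q(t,x,y,\xi)\rho(t)\widehat{m_j}(t)\,dt .
\end{equation*}
Defining $K_j^*(u,v):=2^{-nj}K_j(2^{-j}u,2^{-j}v)$ makes the scaling identity $K_j(x,y)=2^{nj}K_j^*(2^jx,2^jy)$ automatic, and an elementary change of variables shows that \eqref{eq-k2} is equivalent to
\begin{equation*}
\int_M |D_y^\alpha K_j(x,y)|^2\,(1+2^j|x-y|)^{2s}\,dy \le C\,2^{(n+2|\alpha|)j},\qquad 0\le|\alpha|\le 1,
\end{equation*}
uniformly in $x\in M$ and $j\in\mathbb{N}$; since $Q(t,x,y)$ is supported near the diagonal, $K_j(x,\cdot)$ is supported in a fixed coordinate ball about $x$, so this is a local statement.

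The crux is to understand the amplitude. Rescaling $\xi=2^j\eta$ and using that $\phi$ and $p$ are homogeneous of degree one in the covariable, $K_j(x,y)=(2\pi)^{-n}2^{nj}\int e^{i2^j\phi(x,y,\eta)}\tilde a_j(x,y,\eta)\,d\eta$ with $\tilde a_j(x,y,\eta):=a_j(x,y,2^j\eta)$. I would show that $\tilde a_j$ is, up to a term that is $O_N(2^{-jN})$ for every $N$, a symbol of order zero in $\eta$ that decays rapidly away from $|\eta|\sim 1$, and that for every multi-index $\beta$ with $|\beta|\le s$ and arbitrary $\gamma_1,\gamma_2$,
\begin{equation*}
\int |D_\eta^\beta D_x^{\gamma_1}D_y^{\gamma_2}\tilde a_j(x,y,\eta)|^2\,d\eta \le C_{\gamma_1,\gamma_2}\,[m]_s ,
\end{equation*}
uniformly in $x,y,j$. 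The symbol and localization statements follow from integrating by parts repeatedly in $t$ in the formula for $a_j$: away from the shell $p(y,\xi)\sim 2^j$ one gains powers of $p(y,\xi)^{-1}$, while $\partial_t^k(\rho\widehat{m_j})$ costs only a factor $\sim 2^{jk}$ (since $t^k\widehat{m_j}=c_k\widehat{m_j^{(k)}}$ and $m_j^{(k)}$ again obeys \eqref{smooth} up to that loss), and $q\in S^0$ contributes only bounded factors. For the $L^2$ bound I would write $a_j(x,y,\xi)=c\int\kappa(x,y,\xi,\sigma)\,m_j(\sigma)\,d\sigma$ with $\kappa$ a Schwartz kernel in $\sigma$ concentrated at unit scale near $\sigma=p(y,\xi)$ --- namely $\check\rho$ smeared by the smooth, $t$-compactly supported factor $q\rho$ --- so that after rescaling the $\eta$-integral of $|\tilde a_j|^2$, and of its $\eta$-derivatives of order $\le s$, becomes essentially the expression $[m]_s$ of \eqref{smooth} evaluated at $\lambda=2^j$; the $x,y$-derivatives only generate bounded smooth factors built from $p$, $\phi$ and $q$.

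Granting all this, I would conclude by Plancherel in $y$. Taking $\phi(x,y,\xi)=\inp{x-y}{\xi}$ in suitable local coordinates --- the lower-order correction of a general non-degenerate phase being absorbed into $\tilde a_j$ where $2^j|x-y|^2\lesssim 1$ and removed by a further integration by parts in $\eta$ where it is large --- one has, for each multi-index $\beta$,
\begin{equation*}
(x-y)^\beta K_j(x,y)=(2\pi)^{-n}(-1)^{|\beta|}2^{(n-|\beta|)j}\int e^{i2^j\inp{x-y}{\eta}}D_\eta^\beta\tilde a_j(x,y,\eta)\,d\eta .
\end{equation*}
Computing $\|(x-y)^\beta K_j(x,\cdot)\|_{L^2(dy)}^2$ by expanding the square, the $y$-integral becomes the partial Fourier transform in $y$ of a product of two copies of $D_\eta^\beta\tilde a_j$; these are smooth in $y$ with uniformly bounded derivatives, so that transform decays rapidly, and carrying out the remaining integrations with the amplitude bound for $|\beta|\le s$ gives $\|2^{js}|x-y|^s K_j(x,\cdot)\|_{L^2(dy)}+\|K_j(x,\cdot)\|_{L^2(dy)}\le C\,2^{jn/2}[m]_s^{1/2}$ (using fractional powers of $D_\eta$ when $s\notin\mathbb{N}$). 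Since $(1+2^j|x-y|)^{2s}\lesssim 1+(2^j|x-y|)^{2s}$, this yields the weighted $L^2$ estimate for $\alpha=0$; for $|\alpha|=1$ one differentiates under the integral, the derivative falling on $e^{i2^j\phi}$ producing an extra factor $2^j\partial_y\phi=O(2^j)$, precisely the $2^{2j}$ discrepancy between the two right-hand sides. Finally $K_j^*\in C^1$ (in fact $C^\infty$), because $\tilde a_j$ is smooth in $(x,y)$ with rapid $\eta$-decay and every $(x,y)$-derivative of the phase is $O(2^{-j})$ on the relevant range of $\eta$, so the oscillatory integral may be differentiated freely.

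The step I expect to be the real obstacle is the amplitude analysis of the second paragraph: transferring the $L^2$-type condition \eqref{smooth} through the oscillatory $t$-integral uniformly in $x,y,j$, in the presence of the non-compact $t$-support of $\widehat{m_j}$ (so that the frequency localization of $a_j$ is something to prove, not to assume) and of the genuine $t$- and $(x,y)$-dependence of the parametrix amplitude $q$. Once that is in hand, the concluding Plancherel argument is routine.
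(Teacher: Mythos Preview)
The paper does not give its own proof of this lemma: it is stated as a recollection of the $L^2$-bound from Seeger--Sogge \cite{ss} (with a pointer to (5.3.9$'$) in \cite{sogge}), and is used as a black box thereafter. So there is no in-paper argument to compare against.

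Your sketch is precisely the Seeger--Sogge route the paper is citing: write $K_j$ via the parametrix of Theorem \ref{thm-para}, rescale $\xi=2^j\eta$, show the rescaled amplitude $\tilde a_j$ is a zero-order symbol localized to $|\eta|\sim1$ whose $L^2_\eta$-norms of $D_\eta^\beta$ (for $|\beta|\le s$) are controlled by $[m]_s$, and then run a Plancherel argument with the weight $(2^j|x-y|)^s$ traded for $D_\eta^s$. The reduction you write, namely that \eqref{eq-k2} is equivalent to $\int|D_y^\alpha K_j(x,y)|^2(1+2^j|x-y|)^{2s}\,dy\le C\,2^{(n+2|\alpha|)j}$, is correct, and your identification of the amplitude step as the real work is exactly right. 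Two places where your outline is a bit optimistic but not wrong: first, reducing the phase to $\langle x-y,\xi\rangle$ requires the change of variables $\xi\mapsto\kappa(x,y,\xi)$ used in \cite{ss,sogge} (not merely ``absorbing lower-order terms into $\tilde a_j$''), though the effect is the same; second, the passage ``$\partial_t^k(\rho\widehat{m_j})$ costs only $\sim 2^{jk}$'' needs the observation that $t$-derivatives hitting $\rho$ are harmless while those hitting $\widehat{m_j}$ produce $\widehat{\lambda^k m_j(\lambda)}$, whose relevant norms scale like $2^{jk}$ because $m_j$ lives at frequency $\sim2^j$. With those caveats your proposal is a faithful reconstruction of the cited proof.
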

\begin{rem}Applying H\"older's inequality to \eqref{eq-k2} and a change of variables we can deduce that
\begin{equation}\label{eq-k-l1}
\int |K_j (x,y)| dy = \int |{K}_j^* (2^j x, y)| dy \leq C\quad \textrm{for}~ j \in \mathbb{N}.
\end{equation}
\end{rem}
We recall the definition 
\begin{equation*}
\psi_j^{loc}(P):= \int Q(s) \widehat{\psi}_j (s) \rho (s) ds.
\end{equation*}
Now we study the properties of the kernel of the projection operators given by a smooth bump function.
\begin{lem}\label{lem-smooth-loc}
Let $\psi \in C^{\infty}(1/2,1)$ and set $\psi_j (\cdot):= \psi(\cdot/2^j)$ for $j \in \mathbb{N}$. For any $N \in \mathbb{N}$ the operator $\psi_j (P)$ defined by \eqref{eq-m-split-1} is of the form
\begin{equation}\label{eq-zetap}
\psi_j (P) = \psi_j^{loc}(P) + O (2^{-jN}),\quad j \in \mathbb{N}.
\end{equation}
Moreover, the kernel $\mathcal{K}(\psi_j)$ of $\psi_j (P)$ satisfies uniformly for $j\in \mathbb{N}$ the estimate
\begin{equation}\label{eq-ljc}
\int_M \left| \mathcal{K}(\psi_j) (x,y) \right| dx \leq C.
\end{equation}
\end{lem}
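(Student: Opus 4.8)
The plan is to prove the two claims of Lemma~\ref{lem-smooth-loc} in sequence, reusing the machinery already assembled in Sections~2 and~3. For the first claim \eqref{eq-zetap}, I would simply specialize the preceding lemma (equation \eqref{eq-m-tilde}) to the case $m = \psi$; since $\psi \in C^\infty(1/2,1)$ trivially satisfies the H\"ormander--Mikhlin condition \eqref{smooth} for every $s$, the decomposition $\psi_j(P) = A_j(\psi, P) + R_j(\psi, P)$ together with $\psi_j(P) = \psi_j(P)\widetilde\phi_j(P)$ applies verbatim. The remainder $R_j(\psi,P)$ is $O(2^{-jN})$ in $L^\infty$ operator norm by Lemma~\ref{lem-r} (or directly by Proposition~\ref{remainder}, since $[\psi]_s$ is finite for arbitrarily large $s$), and the passage from $A_j(\psi,P)$ to $\psi_j^{loc}(P)$ is exactly \eqref{eq-m-tilde}. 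Collecting these gives \eqref{eq-zetap}.

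For the $L^1$ bound \eqref{eq-ljc} on the kernel $\mathcal{K}(\psi_j)$, the strategy is to split $\mathcal{K}(\psi_j) = K_j^{\psi} + E_j$, where $K_j^\psi$ is the kernel of $\psi_j^{loc}(P)$ and $E_j$ is the kernel of the $O(2^{-jN})$ remainder. For the remainder, I would note that $E_j$ is the kernel of an operator bounded by $C2^{-jN}$ on $L^\infty \to L^\infty$; combined with the smoothness already built into the construction (the remainder kernels $R(t,x,y)$ in Theorem~\ref{thm-para} are $C^\infty$ on a compact set), $\int_M |E_j(x,y)|\,dx \le C2^{-jN}$ is immediate, and summability/uniform boundedness follow. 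For the main term $K_j^\psi$, I would invoke Lemma~\ref{kernell2} applied with $m = \psi$: since $[\psi]_s < \infty$ for any $s > 0$, we get $K_j^\psi(x,y) = 2^{nj} K_j^*(2^j x, 2^j y)$ with the $L^2$-with-weight bound \eqref{eq-k2}, and then the Remark following Lemma~\ref{kernell2}, namely \eqref{eq-k-l1}, gives precisely $\int_M |K_j^\psi(x,y)|\,dy \le C$ uniformly in $j$. The only subtlety is that \eqref{eq-ljc} asks for the integral in the $x$ variable while \eqref{eq-k-l1} is stated in the $y$ variable; this is handled by the symmetry of the construction (one may equally run Lemma~\ref{kernell2} with the roles of $x$ and $y$ interchanged, since $Q(t,x,y)$ and the weight $(1+|x-y|)^{2s}$ are symmetric in the relevant sense), or by a direct application of H\"older in the $x$ variable to the $|\alpha|=0$ case of \eqref{eq-k2} after the change of variables $x \mapsto 2^j x$.

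I expect the main obstacle — really the only point requiring care — to be the bookkeeping around the change of variables and the direction of integration: one must check that the Jacobian factor $2^{nj}$ from $\mathcal{K}(\psi_j)(x,y) = 2^{nj}K_j^*(2^jx,2^jy)$ is exactly absorbed when integrating $dx$ over $M$ (rescaled to a set of volume $O(2^{nj})$ in local coordinates), leaving a scale-invariant quantity that \eqref{eq-k2} controls via Cauchy--Schwarz: $\int |K_j^*(x,y)|\,dx \le \left(\int |K_j^*(x,y)|^2(1+|x-y|)^{2s}\,dx\right)^{1/2}\left(\int (1+|x-y|)^{-2s}\,dx\right)^{1/2} \le C$ provided $2s > n$, which holds since $s$ can be taken as large as desired for the smooth function $\psi$. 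Everything else is a direct citation of results already proved in the excerpt.
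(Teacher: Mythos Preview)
Your approach is correct and matches the paper's: split $\psi_j(P)$ via the parametrix \eqref{eq-eitp} and the cutoff $\rho$, bound the $R(t)$ piece by smoothness and the $(1-\rho)$ piece by Lemma~\ref{lem-r} (exploiting that $[\psi]_s<\infty$ for all $s$), then obtain \eqref{eq-ljc} from \eqref{eq-k-l1} plus the $O(2^{-jN})$ remainder. One small caveat: specializing \eqref{eq-m-tilde} to ``$m=\psi$'' is not literally correct, since in the paper's notation $m_j := m\cdot\phi_j$ rather than $m(\cdot/2^j)$; you should apply the \emph{argument} behind \eqref{eq-m-tilde} directly to $\psi_j$, which is exactly what the paper does (and your remark about the $x$ versus $y$ direction of integration is a point the paper itself glosses over).
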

\begin{proof}
Recalling \eqref{eq-eitp} and \eqref{eq-ajp} we have
\begin{equation}
 \psi_j (P) = \int Q(s) \widehat{\psi}_j (s) \rho (s) ds + \int R(s) \widehat{\psi}_j (s) \rho (s) ds + \int e^{itP} \widehat{\psi}_j (s) (1- \rho (s)) ds.
\end{equation}
As in \eqref{remain} we see that $\int R(s) \rho(s) \widehat{\psi}_j (s) ds = O_N (2^{-jN})$.  Next, we note that a smooth function $\psi \in C_0^{\infty}(1/8, 2)$ satisfies the condition \eqref{smooth} for any $s>0$. Therefore, for any $N \in \mathbb{N}$, we may apply Lemma \ref{lem-r} with $s=N$ to deduce that
\begin{equation}
\int e^{itP} \widehat{\psi}_j (s) (1- \rho (s)) ds = O_N (2^{-jN}).
\end{equation}
It proves the validitiy of \eqref{eq-zetap}.
\

To show \eqref{eq-ljc} we let ${\Psi}_j$ be the kernel of $\psi_j^{loc}(P)$. By \eqref{eq-k-l1} we have 
\begin{equation}
\int \left| {\Psi}_j (x,y) \right| dy  \leq C.
\end{equation}
From this and using \eqref{eq-zetap} we see that
\begin{equation}
\int \left|{\mathcal{K}} (\psi_j ) (x,y)\right| dy \leq \int \left| {\Psi}_j (x,y) \right| dy + \int O(2^{-jN}) dy \leq C,
\end{equation}
which gives \eqref{eq-ljc}. Thus the lemma is proved.
\end{proof}
\begin{rem}
We note that the functions $\phi$ and $\widetilde{\phi}$ defined in Section 2 satisfies the assumption of the above lemma. Therefore we may use the formula \eqref{eq-zetap} for $\phi$ and $\widetilde{\phi}$.
\end{rem}
We have the following result.
\begin{lem}
For $m \in L^{\infty}[0,\infty)$ we have
\begin{equation}\label{eq-ajb}
A_j (m, P) \circ \widetilde{\phi}_j  (P) =  m_j^{loc}(P) \circ \widetilde{\phi}_j^{loc}(P).+ O (2^{-jN}) \quad \forall j \in \mathbb{N}.
\end{equation}
\end{lem}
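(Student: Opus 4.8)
The plan is to assemble \eqref{eq-ajb} from the two local approximations already established: \eqref{eq-m-tilde}, which gives $A_j(m,P)=m_j^{loc}(P)+O(2^{-jN})$, and Lemma \ref{lem-smooth-loc}, which — by the Remark following it, since $\widetilde{\phi}$ satisfies the hypotheses of that lemma — gives $\widetilde{\phi}_j(P)=\widetilde{\phi}_j^{loc}(P)+O(2^{-jN})$. Throughout, every $O(2^{-jN})$ should be read at the level of integral kernels: it denotes an operator whose kernel is bounded in absolute value by $C_N 2^{-jN}$ on the compact set $M\times M$. I would write $A_j(m,P)=m_j^{loc}(P)+E_j$ and $\widetilde{\phi}_j(P)=\widetilde{\phi}_j^{loc}(P)+F_j$ for such error operators $E_j,F_j$, and then expand the composition
\begin{equation*}
A_j(m,P)\circ\widetilde{\phi}_j(P)=m_j^{loc}(P)\circ\widetilde{\phi}_j^{loc}(P)+m_j^{loc}(P)\circ F_j+E_j\circ\widetilde{\phi}_j^{loc}(P)+E_j\circ F_j .
\end{equation*}
It then remains to show that each of the three cross terms has kernel of size $O(2^{-jN})$ for every $N$.

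Each cross term I would handle directly on kernels. The kernel of $m_j^{loc}(P)\circ F_j$ is $\int_M K_j(x,z)\,\mathcal{K}(F_j)(z,y)\,dz$, which is at most $C_N 2^{-jN}\int_M|K_j(x,z)|\,dz\le C\,C_N 2^{-jN}$ by the uniform bound \eqref{eq-k-l1}. Symmetrically, the kernel of $E_j\circ\widetilde{\phi}_j^{loc}(P)$ is controlled by $C_N 2^{-jN}\int_M|\mathcal{K}(\widetilde{\phi}_j^{loc})(z,y)|\,dz$, and this last integral is uniformly bounded: applying Lemma \ref{kernell2} and \eqref{eq-k-l1} with the smooth function $\widetilde{\phi}$ (which satisfies \eqref{smooth} for every $s$) in place of $m$ gives an $L^1$-bound on the kernel of $\widetilde{\phi}_j^{loc}(P)$ in either variable, or one may instead invoke \eqref{eq-ljc} together with the fact that this kernel differs from $\mathcal{K}(\widetilde{\phi}_j)$ by $O(2^{-jN})$. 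Finally $E_j\circ F_j$ has kernel $\int_M \mathcal{K}(E_j)(x,z)\,\mathcal{K}(F_j)(z,y)\,dz$, trivially bounded by $C_N^2\, 2^{-2jN}\,|M|$. Summing the three estimates and using that $N\in\mathbb{N}$ is arbitrary yields \eqref{eq-ajb}.

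The only point I expect to require care is the passage from ``$E_j,F_j$ are small'' to ``their compositions with $m_j^{loc}(P)$ and $\widetilde{\phi}_j^{loc}(P)$ are small'': this needs an $L^1$-type bound on the kernels of $m_j^{loc}(P)$ and $\widetilde{\phi}_j^{loc}(P)$ that is \emph{uniform in} $j$, so that pre- or post-composition with an $O(2^{-jN})$ operator does not consume the decay. That uniformity is precisely what \eqref{eq-k-l1}, \eqref{eq-ljc} and Lemma \ref{kernell2} provide. Everything else is routine bookkeeping, and the arbitrariness of $N$ comfortably absorbs any fixed loss of powers incurred along the way.
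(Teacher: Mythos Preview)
Your proposal is correct and follows essentially the same approach as the paper: both arguments combine the local approximations \eqref{eq-m-tilde} and \eqref{eq-zetap} and absorb the cross terms using the uniform $L^1$-bounds \eqref{eq-k-l1} and \eqref{eq-ljc} on the kernels. The only cosmetic difference is that the paper expands in two steps (first replacing $A_j(m,P)$ by $m_j^{loc}(P)$ while keeping $\widetilde{\phi}_j(P)$, then replacing $\widetilde{\phi}_j(P)$ by $\widetilde{\phi}_j^{loc}(P)$), whereas you expand all four terms at once; the underlying estimates are identical.
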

\begin{proof} Using \eqref{eq-m-tilde} we have
\begin{eqnarray*}
A_j (m, P) \circ \widetilde{\phi}_j (P) = m_j^{loc}(P)  \circ \widetilde{\phi}_j (P) + O (2^{-jN})\widetilde{\phi}_j (P).
\end{eqnarray*}
By \eqref{eq-ljc} we see  $O(2^{-jN}) \circ \widetilde{\phi}_j (P) = O(2^{-jN})$. We can also use \eqref{eq-zetap} and \eqref{eq-k-l1} to see that
\begin{equation}
m_j^{loc}(P)\circ \widetilde{\phi}_j (P) = m_j^{loc}(P) \circ \widetilde{\phi}_j^{loc} (P) + m_j^{loc}(P) \circ O_N (2^{-jN}) = m_j^{loc}(P) \circ \widetilde{\phi}_j^{loc}(P) + O_N (2^{-Nj}).
\end{equation}
Combining all the above we deduce 
\begin{eqnarray}\label{eq-ajzj}
A_j (m, P) \circ \widetilde{\phi}_j (m, P) = m_j^{loc}(P) \circ \widetilde{\phi}_j^{loc}(P) + O (2^{-jN}).
\end{eqnarray}
It completes the proof.
\end{proof}
We set
 the local operator associated to $m(P)$;
\begin{equation}
m^{loc} (P) = \sum_{j=1}^{\infty} \phi_j^{loc}(P) [m_j^{loc}(P) \circ \widetilde{\phi}_j^{loc}(P)] \phi_j^{loc}(P),
\end{equation}
Now we can deduce the following result.
\begin{prop}\label{prop-m-decom}
For $m \in L^{\infty}[0,\infty)$ we have
\begin{equation}\label{eq-m-split}
A(m,P) f = m^{loc}(P) f  + O(1) f.
\end{equation}
\end{prop}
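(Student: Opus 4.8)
The goal of Proposition \ref{prop-m-decom} is to pass from the operator $A(m,P)$, built from the pieces $A_j(m,P)\circ\widetilde\phi_j(P)$, to the fully localized operator $m^{loc}(P)$, built from the pieces $m_j^{loc}(P)\circ\widetilde\phi_j^{loc}(P)$ conjugated by $\phi_j^{loc}(P)$ rather than $\phi_j(P)$, at the cost of an error operator that is bounded (say, on $L^p$ for the relevant range, or simply with kernel of bounded size). So the plan is to expand the difference $A(m,P)-m^{loc}(P)$ as a sum over $j$ of differences of the corresponding summands, and to show that each summand contributes $O(2^{-jN})$ in a norm strong enough that the $j$-sum converges to $O(1)$.

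First I would write, for each $j$,
\begin{equation*}
\phi_j(P)\,[A_j(m,P)\circ\widetilde\phi_j(P)]\,\phi_j(P) - \phi_j^{loc}(P)\,[m_j^{loc}(P)\circ\widetilde\phi_j^{loc}(P)]\,\phi_j^{loc}(P),
\end{equation*}
and telescope this by replacing one factor at a time: use \eqref{eq-ajb} to replace $A_j(m,P)\circ\widetilde\phi_j(P)$ by $m_j^{loc}(P)\circ\widetilde\phi_j^{loc}(P)+O(2^{-jN})$, and use the Remark after Lemma \ref{lem-smooth-loc} (i.e. \eqref{eq-zetap} applied to $\phi$) to replace each outer $\phi_j(P)$ by $\phi_j^{loc}(P)+O(2^{-jN})$. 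Each such replacement produces a main term plus a commutator-type error in which at least one factor is $O(2^{-jN})$ and the remaining factors are operators whose kernels have uniformly bounded $L^1$-in-one-variable norm — by \eqref{eq-ljc} for the $\phi_j$'s and $\widetilde\phi_j$'s, by \eqref{eq-k-l1} for $m_j^{loc}(P)$, and trivially for the $O(2^{-jN})$ factor itself. Composing operators with kernels bounded in $L^1$ norm keeps the composite kernel bounded in $L^1$ norm (by Schur/Young), so each error term has kernel of size $O(2^{-jN})$ uniformly in $j$, hence $O(2^{-jN})$ as an operator in the same sense that \eqref{eq-m-tilde}, \eqref{eq-zetap} and \eqref{eq-ajb} use the notation $O(2^{-jN})$.

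Summing over $j\ge 1$, the total error is $\sum_{j\ge1} O(2^{-jN}) = O(1)$, which is exactly the claimed remainder $O(1)f$ in \eqref{eq-m-split}. I would take $N$ large (any $N\ge1$ suffices for convergence, since $\sum 2^{-j}<\infty$). The bookkeeping is entirely routine once one fixes the convention that $O(2^{-jN})$ denotes an operator whose kernel is bounded pointwise, or in mixed $L^1$ norm, by $C_N 2^{-jN}$ uniformly in $j$ — a convention already implicit in the preceding lemmas.

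The only real subtlety, and the step I would be most careful about, is making sure that the notion of ``$O(2^{-jN})$'' is consistent and stable under the operations performed: it must be a class of operators closed under composition with the specific operators appearing ($\phi_j^{loc}(P)$, $\widetilde\phi_j^{loc}(P)$, $m_j^{loc}(P)$), and the bound must be uniform in $j$. This is where one uses \eqref{eq-k-l1} and \eqref{eq-ljc} crucially: they guarantee the relevant operators are uniformly bounded (in the $L^1$-kernel sense) so that multiplying an $O(2^{-jN})$ error by them does not destroy the decay. Everything else is a finite telescoping identity plus the geometric series, so I do not expect any genuine obstacle beyond this bookkeeping.
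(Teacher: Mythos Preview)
Your proposal is correct and follows essentially the same approach as the paper: replace the middle factor via \eqref{eq-ajb}, replace the outer $\phi_j(P)$'s via \eqref{eq-zetap}, use the uniform $L^1$ kernel bounds \eqref{eq-k-l1} and \eqref{eq-ljc} to control the composed errors, and sum the resulting $O(2^{-jN})$ terms geometrically. Your explicit remark about the stability of the $O(2^{-jN})$ class under composition is exactly the point the paper handles by noting that the relevant kernels have uniformly bounded $L^1$ norm in the second variable.
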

\begin{proof}
Recall that
\begin{equation}
A(m,P) =\sum_{j=1}^{\infty}\phi_j (P) \left[ A_j (m,P) \circ \widetilde{\phi}_j (P) \right]\phi_j (P) f.
\end{equation}
Using \eqref{eq-ajb}, \eqref{eq-k-l1} and $\sum_{j=0}^{\infty}O(2^{-j}) = O(1)$ we get
\begin{equation}
\begin{split}
\sum_{j=1}^{\infty} \phi_j (P) \left[ A_j (m,P) \circ \widetilde{\phi}_j (P)\right] \phi_j (P) f &= \sum_{j=1}^{\infty} \phi_j (P) \left[  m_j^{loc}(P) \circ \widetilde{\phi}_j^{loc}(P). + O (2^{-jN})\right] \phi_j (P) f
\\
& = \sum_{j=1}^{\infty} \phi_j (P)\left[ m_j^{loc}(P) \circ \widetilde{\phi}_j^{loc}(P)\right] \phi_j (P) f + O (1) f.
\end{split}
\end{equation}
Here we note that all the $L^1$-norms of the kernels of $\phi_j (P), m_j^{loc}(P)$, and $\widetilde{\phi}_j^{loc}(P)$ with respect to the second variable are bounded unfiormly for $j \in \mathbb{N}$. 
Also, using Lemma \ref{lem-smooth-loc}, we have $\phi_j (P) = \phi_j^{loc}(P) + O (2^{-jN})$. Combining these two facts we deduce that 
\begin{equation}
\sum_{j=1}^{\infty}\phi_j (P) \left[ m_j^{loc}(P) \circ \widetilde{\phi}_j^{loc}(P)\right] \phi_j (P) f = \sum_{j=1}^{\infty}\phi_j^{loc}(P)\left[ m_j^{loc}(P)\circ \widetilde{\phi}_j^{loc}(P)\right] \phi_j^{loc} (P) f + O (1) f.
\end{equation}
It completes the proof.
\end{proof}
\section{Bound of the localized by Hardy-Littlewood maximal funtion}
We let ${H}_j$ be the kernel of the operator $m_j^{loc}(P) \circ \widetilde{\phi}_j^{loc}(P).$  Let ${\widetilde{\Phi}}_j $ be the kernel of $\int Q(s) \hat{\widetilde{\Phi}}_{j} (s) \rho(s) ds$. By Lemma \ref{kernell2} we have $\widetilde{\Phi}_j (x,y) = 2^{jn} \widetilde{\Phi}_j^* (2^j x, 2^jy)$ with $\widetilde{\Phi}_j$ satisfying
\begin{eqnarray*}
\int |D_y^{\alpha} \widetilde{\Phi}_j^* (x,y)|^2 (1+ |x-y|)^{2N} dx \leq C_N, \quad 0 \leq |\alpha| \leq 1
\end{eqnarray*}
for any $N \in \mathbb{N}$. 

Moreover we have
\begin{equation}\label{composition}
{H}_j (x,z) = \int_M {K}_j (x,y) \widetilde{\Phi}_j (y,z) dy.
\end{equation}
We have the following result.
\begin{lem}\label{lem-hj} Suppose that $m \in L^{\infty}[0,\infty)$ satisfies \eqref{smooth} for some $s>0$. Then we have ${H}_j (x,z) = 2^{jn} H_j^* (2^j x, 2^j z)$ with $H_j^*$ satisfying
\begin{eqnarray*}
\int_M |H_j^* (x,z)|^q (1+ |x-z|)^{{sq}} dz \leq C_q ([m]_s),
\end{eqnarray*}
for each $q \geq 2$. 
\end{lem}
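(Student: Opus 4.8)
The strategy is to estimate the kernel $H_j^*$ directly from the composition formula \eqref{composition}, using the pointwise structure $K_j(x,y)=2^{jn}K_j^*(2^jx,2^jy)$ and $\widetilde\Phi_j(y,z)=2^{jn}\widetilde\Phi_j^*(2^jx,2^jz)$ from Lemma \ref{kernell2} and the remark following it. After rescaling variables by $2^j$ in the $y$-integral, the factor $2^{jn}$ collected from the Jacobian together with the two $2^{jn}$ prefactors leaves $H_j(x,z)=2^{jn}H_j^*(2^jx,2^jz)$ with
\[
H_j^*(u,w)=\int_M K_j^*(u,v)\,\widetilde\Phi_j^*(v,w)\,dv.
\]
So the whole problem reduces to a rescaling-independent kernel estimate: showing $\int |H_j^*(u,w)|^q(1+|u-w|)^{sq}\,dw\le C_q$, where $K_j^*$ satisfies \eqref{eq-k2} (with weight exponent $2s$, derivatives up to order $1$) and $\widetilde\Phi_j^*$ satisfies its analogous bound with weight $(1+|v-w|)^{2N}$ for every $N$, again with one derivative. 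The key point is that $\widetilde\Phi_j^*$ is essentially Schwartz-like off the diagonal, so it behaves like a nice averaging kernel.

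The core estimate I would run is this. Write $(1+|u-w|)^s\le C(1+|u-v|)^s(1+|v-w|)^s$ by the triangle inequality, and split
\[
(1+|u-w|)^s|H_j^*(u,w)|\le C\int_M \big[(1+|u-v|)^s|K_j^*(u,v)|\big]\big[(1+|v-w|)^s|\widetilde\Phi_j^*(v,w)|\big]\,dv.
\]
Then I would use Minkowski's integral inequality in $L^q_w$ to pull the $dv$-integral outside the norm:
\[
\big\|(1+|u-\cdot|)^sH_j^*(u,\cdot)\big\|_{L^q_w}\le C\int_M (1+|u-v|)^s|K_j^*(u,v)|\,\big\|(1+|v-\cdot|)^s\widetilde\Phi_j^*(v,\cdot)\big\|_{L^q_w}\,dv.
\]
The inner $L^q_w$ norm is bounded by a constant uniformly in $v$: this follows from the $\widetilde\Phi_j^*$ estimate with a large $N$, since $\int (1+|v-w|)^{2N}|\widetilde\Phi_j^*(v,w)|^2\,dw\le C_N$ controls, after Hölder in $w$ with the measure weighted by a rapidly decaying factor, every $L^q$ norm against a polynomial weight (choose $N$ so large that $2N-sq$ exceeds what Hölder's conjugate exponent costs). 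That leaves
\[
\big\|(1+|u-\cdot|)^sH_j^*(u,\cdot)\big\|_{L^q_w}\le C\int_M (1+|u-v|)^s|K_j^*(u,v)|\,dv,
\]
and the right side is $\le C$ by Cauchy--Schwarz against \eqref{eq-k2}: $\big(\int(1+|u-v|)^{2s}|K_j^*(u,v)|^2\,dv\big)^{1/2}\cdot |M|^{1/2}\le C$. Raising to the $q$-th power gives the claim with $C_q([m]_s)$ depending on $[m]_s$ through the constant in \eqref{eq-k2}.

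The main obstacle — really a bookkeeping obstacle rather than a conceptual one — is making the inner $L^q_w$ bound on $\widetilde\Phi_j^*$ genuinely uniform in $v$ and in $j$ while $M$ is compact and the rescaled variable $2^jv$ can wander over a large region: one must be careful that the estimates of Lemma \ref{kernell2}, stated on $M\times M$, are being applied after the $2^j$-rescaling to the starred kernels, whose natural domain is a $2^j$-dilate of a coordinate patch, and that the support of $Q$ near the diagonal keeps everything in a fixed coordinate chart so that Euclidean distances make sense. Once that is set up (as it already is in the statements of Lemma \ref{kernell2} and the display preceding \eqref{composition}), the rest is the Minkowski--Hölder--Cauchy--Schwarz chain above. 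A secondary point to check is that $q\ge 2$ is exactly what is needed so that the $L^2$-type hypotheses \eqref{eq-k2} feed into an $L^q$ conclusion without loss; for $q<2$ one would need compactness of $M$ more essentially, but the statement only claims $q\ge 2$.
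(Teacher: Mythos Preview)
Your proposal is correct and follows essentially the same route as the paper: rewrite $H_j^*$ via the rescaled composition $H_j^*(x,z)=\int K_j^*(x,y)\,\widetilde\Phi_j^*(y,z)\,dy$, split the weight by $(1+|x-z|)^s\le(1+|x-y|)^s(1+|y-z|)^s$, and feed in the weighted $L^2$ bounds from Lemma~\ref{kernell2} for $K_j^*$ and its analogue (with arbitrary $N$) for $\widetilde\Phi_j^*$. The only difference is cosmetic: the paper applies Cauchy--Schwarz directly in the $y$-integral to get the pointwise bound $(1+|x-z|)^s|H_j^*(x,z)|\le C$, from which the $L^q$ statement follows immediately, whereas you reach the same conclusion through Minkowski in $L^q_w$ followed by Cauchy--Schwarz in $v$.
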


\begin{proof}
We write \eqref{composition} as 
\begin{equation*}
\begin{split}
2^{jn} H_j^* (2^j x, 2^j z) &= \int_M 2^{jn} K_j^* (2^j x, 2^j y) 2^{jn} \widetilde{\Phi}_j^* (2^j y, 2^j z) dy
\\
&= \int_M 2^{jn} K_j^* (2^j x, y) \widetilde{\Phi}_j^* (y, 2^j z) dy.
\end{split}
\end{equation*}
Thus it holds that
\begin{eqnarray*}
H_j^* (x,z) = \int_M K_j^* (x,y) \widetilde{\Phi}_j^* (y,z) dy.
\end{eqnarray*}
Using Lemma \ref{kernell2} and H\"older's inequality, we have
\begin{equation*}
\begin{split}
(1 + |x-z|)^s |H_{j}^* (x,y)| &= ( 1+ |x-z|)^s \int_M K_{j}^* (x,y) \widetilde{\Phi}_j^* (y,z) dy
\\
&\leq \int_M K_{j}^* (x,y) ( 1+ |x-y|)^s \cdot \widetilde{\Phi}_j^* (y,z) (1+|y-z|)^s dy
\\
&\leq  \left(\int_M |K_{j}^* (x,y)|^2 ( 1 + |x-y|)^{2s} dy\right)^{1/2} \cdot \left(\int_M |\widetilde{\Phi}_j^* (y,z)|^2 (1+|y-z|)^{2s} dy \right)^{1/2}
\end{split}
\end{equation*}
It completes the proof.
\end{proof}

From Theorem \ref{thm-para} we see that the above two operators are both local operators, i.e., their kernels have supports on near the diagonal set in $M \times M$. Therefore, the kernel of the operator $m_j^{loc}(P) \circ \phi_j^{loc}(P)$ has also support near the diagonal.
\begin{lem}\label{lem-lj}
Let $\psi \in C^{\infty}(1/2,1)$ and set $\psi_j (\cdot):= \psi(\cdot/2^j)$ for $j \in \mathbb{N}$. Let $\Psi_j$ be the kernel of $\psi_j^{loc}$. Then we have
\begin{equation} 
\int {\Psi}_j (x,y) dx = O_N (2^{-jN}).
\end{equation}
\end{lem}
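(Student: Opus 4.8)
The plan is to reduce the claim to a statement about $\psi_j(P)$ acting on the constant function $\mathbf 1\equiv 1$, and then to use that a fixed smooth function is damped to infinite order by a spectral cut-off at frequency $2^j$.

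First I would replace $\psi_j^{loc}(P)$ by $\psi_j(P)$. By \eqref{eq-zetap} in Lemma \ref{lem-smooth-loc}, $\psi_j(P)=\psi_j^{loc}(P)+O_N(2^{-jN})$, the error term being a kernel bounded uniformly (in all variables) by $C_N2^{-jN}$; since $M$ has finite volume, integrating over $x$ gives $\int_M\Psi_j(x,y)\,dx=\int_M\mathcal K(\psi_j)(x,y)\,dx+O_N(2^{-jN})$, where $\mathcal K(\psi_j)$ denotes the kernel of $\psi_j(P)$. So it suffices to control $\int_M\mathcal K(\psi_j)(x,y)\,dx$.

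The one substantive point is the identity
$$\int_M\mathcal K(\psi_j)(x,y)\,dx=\overline{\big[\psi_j(P)\mathbf 1\big](y)}.$$
Indeed, for any operator $T$ with integral kernel $K$ one has $\int_MK(x,y)\,dx=\overline{(T^*\mathbf 1)(y)}$ (test $\langle Tf,g\rangle=\langle f,T^*g\rangle$ with $g=\mathbf 1$, using \eqref{eq-ej}), and $\psi_j(P)$ is self-adjoint since $\psi_j$ is real-valued and $P$ is self-adjoint. Thus it is enough to prove $\|\psi_j(P)\mathbf 1\|_{L^\infty(M)}=O_N(2^{-jN})$. Here I would use that $\mathbf 1\in C^\infty(M)$ lies in the domain of $P^m$ for every $m$, so with $c_k:=\langle\mathbf 1,e_k\rangle$ we have $\sum_k\lambda_k^{2m}|c_k|^2=\|P^m\mathbf 1\|_{L^2}^2<\infty$; since $\psi_j(\lambda_k)\neq0$ forces $\lambda_k\in(2^{j-1},2^j)$, this yields $\big(\sum_{\lambda_k\in(2^{j-1},2^j)}|c_k|^2\big)^{1/2}\le C_m2^{-jm}$. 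Writing $[\psi_j(P)\mathbf 1](y)=\sum_{\lambda_k\in(2^{j-1},2^j)}\psi_j(\lambda_k)c_ke_k(y)$ and applying the Cauchy--Schwarz inequality in $k$, together with the bound $\sum_{\lambda_k\in[l,l+1)}|e_k(y)|^2\le C(1+l)^{n-1}$ (an immediate consequence of \eqref{eq-infty-2} in Lemma \ref{lem-82}) summed over the $O(2^j)$ unit windows covering $(2^{j-1},2^j)$, I get $\sum_{\lambda_k\in(2^{j-1},2^j)}|e_k(y)|^2\le C2^{jn}$, and hence $|[\psi_j(P)\mathbf 1](y)|\le C_m2^{-j(m-n/2)}$ uniformly in $y$. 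Taking $m=N+n$ finishes the proof.

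The real obstacle is locating the identity in the third paragraph: once integration of the kernel in the first variable is recognized as evaluation of $\psi_j(P)$ on $\mathbf 1$, the rest is the routine quantitative version of Littlewood--Paley orthogonality, powered only by the spectral-cluster estimate already recorded in Lemma \ref{lem-82}. The one thing to watch is that the $O_N(2^{-jN})$ coming from \eqref{eq-zetap} must be a genuinely uniform pointwise kernel bound so that it survives the $x$-integration; this is how it was produced and used in Lemma \ref{lem-smooth-loc}, so it is legitimate here too. (A more hands-on alternative would be to start from the oscillatory-integral formula for $Q$ in Theorem \ref{thm-para} and integrate by parts in $x$, exploiting $|\partial_x\phi|\sim|\xi|\sim 2^j$ on the relevant support, but this requires unpacking the phase and is less self-contained.)
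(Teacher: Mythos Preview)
Your proof is correct and follows the same strategy as the paper: both identify the kernel integral with $\psi_j^{loc}(P)$ acting on the constant function $\mathbf 1$ (you pass through the adjoint to handle the $dx$ as written, the paper integrates in $y$), and then trade $\psi_j^{loc}(P)\mathbf 1$ for $\psi_j(P)\mathbf 1$ modulo the $O_N(2^{-jN})$ error pieces already isolated in Lemma~\ref{lem-smooth-loc}. The one difference is that the paper simply asserts $[\psi_j(P)\mathbf 1](x)=0$ for $j\ge 1$ and works backwards from there, whereas you supply the quantitative bound $\|\psi_j(P)\mathbf 1\|_{L^\infty}=O_N(2^{-jN})$ via the smoothness of $\mathbf 1$ and the spectral-cluster estimate of Lemma~\ref{lem-82}; since the standing hypotheses do not force $\mathbf 1$ to be an eigenfunction of $P$, your version is the more robust one, but it is a refinement of the same argument rather than a different route.
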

\begin{proof}
Since $j \geq 1$ we have that $[{\psi}_{j} (P) 1 ](x) = 0$ for all $x \in M$. Recall that ${\psi}_{j}(P)$ equals to
\begin{equation}
\begin{split}
{\psi}_j (P) &= \int e^{itP} \hat{{\psi}}_j (s) \rho (s) ds + \int e^{it P} \hat{{\psi}}_j (s) [ 1- \rho (s)] ds.
\\
&= \int [Q(s) + R(s)] \hat{{\psi}}_j (s) \rho (s) ds + \int e^{it P} \hat{{\psi}}_j (s) [1-\rho(s)] ds.
\end{split}
\end{equation}
Thus we have
\begin{equation}\label{eq-q-r-zeta}
\left[ \int [Q(s)]\hat{{\psi}}_j (s) \rho (s) dx\right] 1 (x) = - \left[\int R(s) \hat{{\psi}}_j (s) \rho (s) ds\right] 1 (x) - \left[ \int e^{itP} \hat{{\psi}}_j (s) [1-\rho(s)]dx\right] 1 (x).
\end{equation}
Observing that $R(s) \rho (s)$ is a smooth function and ${\psi}_j (s)$ is supported on $[2^{j-1}, 2^{j+1}]$ we deduce 
\begin{equation*}
\int R(s) \widetilde{\phi}_j (s) \rho(s) ds = \int \left[ R(\cdot) \rho (\cdot)\right]^{\wedge} (s) {\psi}_j (s) dx = O(2^{-jN}).
\end{equation*}
Next, we may apply Lemma \ref{lem-r} for ${\psi}$ with any $s>0$ since  ${\psi}$ is smooth. Then we have
\begin{equation}
 \int e^{itP} \hat{\psi}_j (s) [1-\rho(s)] ds= O_N (2^{-jN}).
\end{equation}
Injecting the above two estimates into \eqref{eq-q-r-zeta} we get
\begin{equation}
\left[ \int Q(s) \hat{\psi}_j (s) \rho(s) ds\right] 1 (x) = O_N (2^{-jN}).
\end{equation}
Because we have the relation $\int {\Psi}_j (x,y) dy = \int  \left[ Q(s) \hat{{\psi}}_j (s) \rho(s) dx\right] 1 (x)$, the above bound proves the lemma.
\end{proof}
\begin{cor}\label{lem-hj-2} Suppose that $m \in L^{\infty}[0,\infty)$ satisfies the condition \eqref{smooth} for some $s>0$. Then we have ${H}_j (x,z)$
\begin{eqnarray*} 
\int H_j (x,z) dz = O_N (2^{-jN})
\end{eqnarray*}
for any $N \in \mathbb{N}$.
\end{cor}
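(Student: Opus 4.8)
The plan is to derive Corollary \ref{lem-hj-2} from the composition formula \eqref{composition} together with the $L^1$-type bounds on the kernels already available. Recall that $H_j(x,z) = \int_M K_j(x,y)\widetilde{\Phi}_j(y,z)\,dy$, where $K_j$ is the kernel of $m_j^{loc}(P)$ and $\widetilde{\Phi}_j$ is the kernel of $\widetilde{\phi}_j^{loc}(P)$. Integrating in $z$ and applying Fubini,
\begin{equation*}
\int_M H_j(x,z)\,dz = \int_M K_j(x,y)\left(\int_M \widetilde{\Phi}_j(y,z)\,dz\right) dy.
\end{equation*}
The inner integral is exactly the quantity controlled by Lemma \ref{lem-lj} applied to $\psi = \widetilde{\phi}$ (which satisfies the hypotheses of that lemma by the remark following Lemma \ref{lem-smooth-loc}), so $\int_M \widetilde{\Phi}_j(y,z)\,dz = O_N(2^{-jN})$ uniformly in $y$.

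Next I would bound the remaining $y$-integral using the uniform $L^1$ bound on $K_j$ in its second variable. By the remark following Lemma \ref{kernell2}, namely \eqref{eq-k-l1}, we have $\int_M |K_j(x,y)|\,dy \leq C$ uniformly in $j$ and $x$. Combining this with the pointwise bound on the inner integral gives
\begin{equation*}
\left|\int_M H_j(x,z)\,dz\right| \leq \sup_{y \in M}\left|\int_M \widetilde{\Phi}_j(y,z)\,dz\right| \cdot \int_M |K_j(x,y)|\,dy \leq C\, O_N(2^{-jN}) = O_N(2^{-jN}),
\end{equation*}
uniformly in $x \in M$, which is the claimed estimate.

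There is essentially no obstacle here; the corollary is a routine consequence of the cancellation in Lemma \ref{lem-lj} combined with the $L^1$-boundedness of $K_j$. The only point requiring a little care is the interchange of the order of integration, which is justified because both $K_j$ and $\widetilde{\Phi}_j$ are (for fixed $j$) continuous kernels supported near the diagonal of the compact manifold $M$, hence integrable on $M \times M$. One should also note that the constant implicit in $O_N(2^{-jN})$ depends on $[m]_s$ only through the uniform constant in \eqref{eq-k-l1}, which in turn depends only on $[m]_s$ by Lemma \ref{kernell2}; the $\widetilde{\phi}$-factor contributes a constant independent of $m$ since $\widetilde{\phi}$ is a fixed smooth bump. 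Thus the bound is uniform in the sense required for the later application.
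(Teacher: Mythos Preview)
Your argument is correct and coincides with the paper's own proof: both interchange the order of integration in \eqref{composition}, invoke Lemma \ref{lem-lj} (for $\psi=\widetilde{\phi}$) to bound the inner integral by $O_N(2^{-jN})$, and then absorb the outer integral using the uniform $L^1$ bound on $K_j$ from \eqref{eq-k-l1} (equivalently, \eqref{eq-k2} plus H\"older). Your additional remarks on the legitimacy of Fubini and on the dependence of the constants on $[m]_s$ are accurate and harmless.
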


\begin{proof}
Let $K_j$ be the kernel of $\int Q(t) \hat{m}_j (t) \rho(t) dt$. By \eqref{eq-k2} and H\"older's inequality we have
\begin{equation}
\int_M \left| K_j (x,y)\right| dx \leq C.
\end{equation}
Using \eqref{composition} and Lemma \ref{lem-lj}, we may deduce that
\begin{equation*}
\begin{split}
\left|\int H_j (x,z) dz\right| &= \left|\int \left[\int \widetilde{\Phi}_j (y,z) dz\right] K_j (x,y) dy\right|
\\
&\leq \int O(2^{-Nj})|K_j (x,y)| dy
\\
&= O (2^{-Nj}).
\end{split}
\end{equation*}
It completes the proof.
\end{proof}

Up to now, we have localized the kernel by splitting it into a major part $H_j$ and a remainder part. At this stage we concentrate on the major term ${H}_j$. 
\begin{lem}Assume that $s > \frac{n}{r}$. We have
\begin{eqnarray*}
|{H}_j * f (x) | \lesssim M_r f(x) \cdot \| m_j \|_{L_2^{\alpha}}, 
\end{eqnarray*}
\end{lem}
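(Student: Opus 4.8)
The plan is to exploit the structure ${H}_j(x,z) = 2^{jn} H_j^*(2^j x, 2^j z)$ from Lemma \ref{lem-hj}, together with the pointwise decay estimate it provides, to dominate the convolution by a Hardy--Littlewood type maximal function. First I would write, using the scaling relation and the change of variables $y \mapsto 2^j y$,
\begin{equation*}
|{H}_j * f(x)| = \left| \int_M 2^{jn} H_j^*(2^j x, 2^j y) f(y)\, dy \right| \leq \int_M 2^{jn} |H_j^*(2^j x, 2^j y)|\, |f(y)|\, dy.
\end{equation*}
The point of the weighted $L^q$ bound in Lemma \ref{lem-hj} is that it gives enough decay in $|x-z|$ to sum over dyadic annuli $\{ 2^{k} \leq |2^j x - 2^j y| < 2^{k+1}\}$, $k \geq 0$, once we pass to the exponent $q$ that is dual to $r$. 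Here I would choose $q$ with $1/q + 1/r = 1$, so that $q \geq 2$ exactly when $r \leq 2$ (the relevant range, and one handles $r > 2$ by Hölder/monotonicity of the $M_r$ scale), and the hypothesis $s > n/r$ becomes $sq > n(q-1)/1 \cdot \ldots$ — more precisely, $sq - n > n(q/r') $ should be arranged so the geometric series converges; the key arithmetic is $s r' > n$ where $r' = q$, i.e. $s > n/r$, which is precisely the assumption.

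The main step is the annular decomposition. On the annulus $A_k = \{ y : 2^{k-1} \leq |2^j x - 2^j y| < 2^k \}$ (with $A_0$ the ball $|2^j x - 2^j y| < 1$), Hölder's inequality with exponents $q$ and $r$ gives
\begin{equation*}
\int_{A_k} 2^{jn} |H_j^*(2^j x, 2^j y)|\, |f(y)|\, dy \leq \left( \int_{A_k} |H_j^*(2^j x, 2^j y)|^q\, dy \right)^{1/q} \left( \int_{A_k} 2^{jnr} |f(y)|^r\, dy \right)^{1/r}.
\end{equation*}
On $A_k$ one has $(1+|2^j x - 2^j y|)^{sq} \gtrsim 2^{ksq}$, so the first factor is bounded by $2^{-ks} C_q([m]_s)^{1/q}$ after another change of variables back. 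The second factor is, up to the Jacobian, $\left( 2^{jn} \int_{|x-y| \lesssim 2^{k-j}} |f(y)|^r\, dy \right)^{1/r} \lesssim 2^{kn/r} \left( \frac{1}{|B(x,2^{k-j})|} \int_{B(x,2^{k-j})} |f|^r \right)^{1/r} \lesssim 2^{kn/r} (M_r f(x))^{1}$, since the ball $B(x, 2^{k-j})$ has volume comparable to $2^{(k-j)n} \cdot 2^{jn}/2^{jn}$... I would keep careful track so that the $2^{jn}$ factors cancel exactly, leaving $2^{kn/r} M_r f(x)$. Summing over $k \geq 0$ yields $\sum_k 2^{-ks} 2^{kn/r} M_r f(x) \cdot C_q([m]_s)^{1/q}$, which converges precisely because $s > n/r$, and equals $C \cdot M_r f(x) \cdot \|m_j\|_{L^\alpha_2}$ once $C_q([m]_s)$ is identified with (a power of) the quantity $\|m_j\|_{L^\alpha_2}$ appearing in the statement.

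The main obstacle I anticipate is purely bookkeeping with the dilation factors: the kernel carries a $2^{jn}$, the weight in Lemma \ref{lem-hj} is stated in the rescaled variables, and the maximal function must come out scale-invariant, so one has to verify that all powers of $2^j$ cancel and that the radius of the ball on which $M_r f$ is evaluated is the ``correct'' one (namely $\sim 2^{k-j}$). A secondary point requiring a word of care is the endpoint behavior of the geometric series and the treatment of $r > 2$, where $q < 2$ falls outside the range of Lemma \ref{lem-hj}; there one simply notes $M_r f \leq (M_{r'} f)$ fails in the wrong direction, so instead one applies the lemma with some $\tilde q \geq 2$ and $\tilde r \leq r$ with $s > n/\tilde r$ still valid (possible by shrinking $\tilde r$ slightly, using $s > n/r$ strictly), and bounds $M_{\tilde r} f \leq M_r f$ pointwise. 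Apart from this, the argument is a standard kernel-domination estimate and presents no conceptual difficulty.
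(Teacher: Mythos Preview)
Your proposal is correct and follows essentially the same route as the paper: choose the dual exponent $q$ with $1/q+1/r=1$, decompose the rescaled kernel $H_j^*$ into dyadic annuli $\{2^{l-1}\le |x-y|<2^l\}$, apply H\"older with exponents $(q,r)$ on each annulus together with the weighted $L^q$ bound of Lemma~\ref{lem-hj}, and sum the resulting geometric series $\sum_l 2^{l(n/r-s)}$ using $s>n/r$. Your extra remark on handling $r>2$ via a smaller $\tilde r$ and the monotonicity $M_{\tilde r}f\le M_r f$ is a point the paper glosses over (it simply takes $q>2$), but otherwise the arguments coincide.
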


\begin{proof}Let us take $q >2$ such that $\frac{1}{q}+\frac{1}{r} =1$. By Lemma \ref{kernell2} we have $H_j^*$ such that ${H}_j (x,z) = 2^{jn} H_j^* (2^j x, 2^j z)$ and 
\begin{equation}\label{partition}
\int_G |x-y|^{\alpha q} |H_j^* (x,y)|^{q} dy \lesssim \| \widetilde{m}_k\|_{H^{s}}^{q}, \quad \textrm{for all} ~0\leq \alpha < s.
\end{equation}  
Set $H_{k,l}^* (x,y) = H_k^{*} (x,y) \cdot 1_{\{2^{l-1} \leq |x-y| < 2^l\}}$ for $l \in \mathbb{N}$ and ${H}_{k,0}^* (x,y) = {H}_k^* (x,y) \cdot 1_{\{ |x-y| < 1\}}$. Then we deduce from \eqref{partition} that 
\begin{eqnarray}\label{alpha}
\sup_{l\geq 0} 2^{l \alpha q} \int |{H}_{k,l}^* (x,y)|^{q} dy \lesssim \| \widetilde{m}_k\|_{H^{s}}^{q} \quad \textrm{for} \quad 0 \leq \alpha < s.
\end{eqnarray} 
Since $\frac{n}{r} <s$ we can take an $\epsilon >0$ such that $\alpha :=\frac{n}{r}+\epsilon < s$. By a direct calculation we have 
\begin{equation}
\begin{split}
|m_k (L) f (x)| &=  \left| \int_G 2^{nk} H_k^* (2^{k}x, 2^{k} y) f(y) dy\right|
\\
 &=\left| \sum_{l=0}^{\infty} \int_G 2^{nk} {H}_{k,l}^* (2^{k}x, 2^{k} y) f (y) dy \right|
\\
&\leq  \sum_{l=0}^{\infty} \left( \int_G 2^{nk} |H^{*}_{k,l}(2^{k}x, 2^{k} y)|^{q} dy\right)^{1/q} \left(2^{nk}\int_{|x-y| \leq 2^{l -k}} |f(y)|^{r} dy\right)^{1/r}
\\
&\leq \sum_{l=0}^{\infty} 2^{(ln/r)l}( M (|f|^r)(x))^{1/r} \left( \int_G |{H}^*_{k,l}(y)|^{q} dy \right)^{1/q}.  
\end{split}
\end{equation}
Now we apply \eqref{alpha} to get
\begin{equation}
\begin{split}
|m_k (L) f (x)|& \leq   \| \widetilde{m}_k \|_{H^s} ~\sum_{l=0}^{\infty} 2^{ln/r} 2^{-l\alpha } (M (|f|^r ) (x))^{1/r} = \|\widetilde{m}_k\|_{H^{s}} \sum_{l=0}^{\infty} 2^{-{l \epsilon}} (M(|f|^r)(x))^{1/r}
\\
&\leq \| \widetilde{m}_k\|_{H^{s}} (M(|f|^r)(x))^{1/r}.
\end{split}
\end{equation}
It proves the lemma.
\end{proof}

Let $\Phi_J$ be the kernel of $\phi_j^{loc}$ and set 
\begin{equation}
\Phi_j^{*} (x,y) = 2^{-nj} \Phi_j (2^{-j}x, 2^{-j}y).
\end{equation}
Then we have
\begin{equation}
\sup_{j \geq 1} \int \left|\Phi_j^{*} (x,y)\right|^2 (1+|x-y|)^{2s} dx \leq C.
\end{equation}
 
\section{Martingale operators and their interaction with the multipliers}
For $k \in \mathbb{Z}$ we consider the set of dyadic cubes $[ 2^k a_1, 2^k (a_1 +1))\times \cdots \times [2^k a_n, 2^k (a_n +1))$ for each $a=(a_1,\cdots,a_n) \in \mathbb{Z}^n$. 
The expectation operator $\mathbb{E}_k$ is defined by
\begin{eqnarray*}
\mathbb{E}_k f (x) = \mu(Q_{\alpha}^{k})^{-1} \int_{Q_{\alpha}^{k}} f d\mu \quad \textrm{for} ~ x \in Q_{\alpha}^{k}.
\end{eqnarray*}
Then we define the martingale by $ \mathbb{D}_k f (x) = \mathbb{E}_{k+1} f (x) - \mathbb{E}_k f (x).$ We also define the following square function 
\begin{eqnarray*}
S (f) = \left(\sum_{k \geq 0} | \mathbb{D}_k f(x)|^2\right)^{1/2}
\end{eqnarray*}
We recall the following result on $\mathbb{E}_k$ and $S(f)$.
\begin{thm}[see {\cite[Corollary 3.1.]{CW}}] There is a constant $C_d > 0$ such that, for any $\lambda >0$, and $0 < \epsilon < \frac{1}{2}$, the following inequality holds.
\begin{equation}\label{eq-cw}
\begin{split}
\textrm{meas} ( \{ x : \sup_{k \geq 0} | \mathbb{E}_k g(x) - \mathbb{E}_{0} g(x)| > 2\lambda, S(g) < \epsilon \lambda \}) 
\\
\leq C \textrm{exp}( - \frac{C_d}{\epsilon^2}) \textrm{meas} ( \{ x : \sup_{k \geq 0} | \mathbb{E}_k g(x)| > \lambda \}) ;
\end{split}
\end{equation}
\end{thm}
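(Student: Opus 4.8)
The plan is to establish \eqref{eq-cw}, which is the Chang--Wilson--Wolff good-$\lambda$ inequality for the dyadic martingale $(\mathbb{E}_k g)_{k\ge 0}$. I would prove it in three steps: a stopping-time (Calder\'on--Zygmund) reduction of the estimate to a single dyadic cube, a Gaussian tail bound for a dyadic martingale whose square function is pointwise small, and a summation over the stopping cubes.

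\emph{Localization.} Since $\sup_k|\mathbb{E}_k g - \mathbb{E}_0 g|>2\lambda$ forces $\sup_k|\mathbb{E}_k g|>\lambda$, the set on the left of \eqref{eq-cw} is contained in the level set $\Omega$ on the right, and I would decompose $\Omega$ into the disjoint family $\{Q_i\}$ of maximal dyadic cubes on which the average of $g$ first exceeds $\lambda$ in modulus. On the dyadic parent of $Q_i$ that average is $\le\lambda$, so $|\mathbb{E}_k g|\le\lambda$ on $Q_i$ for every generation $k$ strictly below the generation $k_i$ of $Q_i$; and on the set $\{S(g)<\epsilon\lambda\}$, where each martingale increment has size $<\epsilon\lambda$, one also has $|\mathbb{E}_{k_i}g|\le(1+\epsilon)\lambda$ there. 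Restarting the martingale at the scale of $Q_i$ by $h_k:=\mathbb{E}_{\max(k,k_i)}g - \mathbb{E}_{k_i}g$ produces a dyadic martingale that vanishes for $k\le k_i$ and whose square function on $Q_i$ is dominated by $S(g)$. A short computation with the stopping property — the generations $k\le k_i$ contribute at most $2\lambda$ to $\sup_k|\mathbb{E}_k g-\mathbb{E}_0 g|$ — shows that on the part of $Q_i$ lying in the left-hand set of \eqref{eq-cw} one must have $\sup_k|h_k|>c\lambda$ for some fixed $c>0$, while there $S(h)\le S(g)<\epsilon\lambda$. It then suffices to prove the per-cube bound $\mathrm{meas}(\{x\in Q_i:\sup_k|h_k(x)|>c\lambda,\ S(h)(x)<\epsilon\lambda\})\le C\,e^{-C_d/\epsilon^2}\,\mathrm{meas}(Q_i)$ and sum over the disjoint $Q_i$.

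\emph{The Gaussian tail.} The heart of the matter is: for a dyadic martingale $(h_k)$ on a cube $Q$ with $h\equiv 0$ at the top and $\|S(h)\|_{L^\infty(Q)}\le\sigma$, one has $\mathrm{meas}(\{x\in Q:\sup_k|h_k(x)|>\beta\})\le 2\,e^{-\beta^2/(C\sigma^2)}\,\mathrm{meas}(Q)$. To get it I would note that, conditioned on the generation-$k$ cubes, the difference $\mathbb{D}_k h$ is mean zero and bounded by $a_k$, the supremum of $|\mathbb{D}_k h|$ over the ambient generation-$k$ cube, so Hoeffding's bound gives $\mathbb{E}_k[e^{\theta\mathbb{D}_k h}]\le e^{\theta^2 a_k^2/2}$; hence $U_k:=\exp(\theta h_k-\tfrac{\theta^2}{2}\sum_{j<k}a_j^2)$ is a supermartingale with $\mathbb{E}_Q U_k\le 1$, and Chebyshev's inequality with the optimal $\theta=\beta/\sigma^2$ yields the tail bound for each $h_k$; passing to $\sup_k|h_k|$ is routine via a stopping time, and $-h$ is treated identically, producing the factor $2$. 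The delicate point is that for $n\ge 2$ the quantity $\sum_j a_j^2$ need not be pointwise $\le S(h)^2$, so one first refines the dyadic filtration into binary bisections (or absorbs a dimensional constant into $C$), and it is exactly here that the sharp Gaussian shape is produced.

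\emph{Conclusion.} Applying the tail estimate to $h$ on each $Q_i$ with $\beta=c\lambda$, $\sigma=\epsilon\lambda$ gives the per-cube bound with $C_d=c^2/C$, and summing over the disjoint stopping cubes (so that $\sum_i\mathrm{meas}(Q_i)=\mathrm{meas}(\Omega)$) and relabelling constants yields \eqref{eq-cw}. I expect the Gaussian tail of the second step to be the main obstacle: it is the only place where the exponent $C_d/\epsilon^2$, rather than a mere polynomial, appears, and in dimension $\ge 2$ the comparison between the pointwise square function and the conditional (cube-wise) one requires genuine care; the stopping-time decomposition and the constant-chasing are otherwise routine.
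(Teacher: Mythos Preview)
The paper does not give its own proof of this statement: it is quoted verbatim as a known result from Chang--Wilson--Wolff \cite[Corollary~3.1]{CW} and used as a black box in the proof of Proposition~\ref{prop-main}. So there is nothing in the paper to compare your argument against.

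That said, your sketch is essentially the standard proof of the Chang--Wilson--Wolff good-$\lambda$ inequality: the stopping-time localization to maximal cubes, the sub-Gaussian tail estimate via the exponential supermartingale $\exp(\theta h_k-\tfrac{\theta^2}{2}\sum a_j^2)$, and the summation over the disjoint stopping cubes are exactly the ingredients of the original argument. Your remark about the discrepancy in dimension $n\ge 2$ between $\sum_j a_j^2$ (the cube-wise maxima of the increments) and the pointwise $S(h)^2$ is well taken; the usual fix is precisely the binary refinement you mention, which only affects the dimensional constant $C_d$. Nothing in your outline is wrong, and it would yield \eqref{eq-cw} with the stated exponential dependence on $\epsilon$.
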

 
Let us introduce the following functional
\begin{eqnarray*}
G_r (f) = ( \sum_{k \in \mathbb{N}} (\mathcal{M} (|{\phi}^{loc}_k (P) f |^r))^{2/r} )^{1/2}
\end{eqnarray*}
We have the following ineqaulity due to Fefferman-Stein \cite{FeS}.
\begin{eqnarray*}
\| G_r (f) \|_p \leq C_{p,r} \| f \|_p , \qquad 1 < r <2, r < p < \infty.
\end{eqnarray*}
In order to prove Proposition \ref{prop-main} we shall make use of inequality \eqref{cw} with $g = m^{loc}(P) f$. For this it will be requied to bound the $L^p$-norm of $S(m^{loc}(P)f$ by a constant multiplier of $\|f\|_{L^p}$.

We need the following lemma which explains the cancellation property.
\begin{lem}
$| \mathbb{E}_k ( {\phi}_j f ) (x) \leq 2^{( k - j)/q' } M_q f (x) $ if $ j > k + 10$.
\\
$| \mathbb{B}_k ({\phi}_j f ) (x) | \leq 2^{(j -k )/q'} M_q f(x)$ if $ j <  k - 10.$
\end{lem}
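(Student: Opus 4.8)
The plan is to prove the two estimates by exploiting the cancellation of the operators $\phi_j(P)$ (equivalently $\phi_j^{loc}(P)$) against the averaging operators $\mathbb{E}_k$, using the kernel bounds established in Section~5. Write $\Phi_j$ for the kernel of $\phi_j^{loc}(P)$, so that by Lemma~\ref{lem-lj} and the rescaling $\Phi_j(x,y)=2^{jn}\Phi_j^*(2^jx,2^jy)$ we have both $\int \Phi_j^*(x,y)\,dx = O_N(2^{-jN})$ and the weighted $L^2$ bound $\int |\Phi_j^*(x,y)|^2(1+|x-y|)^{2s}\,dx \le C$. I would fix a dyadic cube $Q = Q_\alpha^k$ with side length $2^{-k}$ (the paper's indexing has $Q_\alpha^k$ of side $2^k$, but the roles of large/small scale are symmetric; I will follow whichever sign convention makes $j>k+10$ mean ``the multiplier localizes at a frequency much finer than the averaging scale'').

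For the first inequality, the case $j > k+10$: here $\mathbb{E}_k(\phi_j(P)f)$ on $Q$ is the average over $Q$ of $\int \Phi_j(x,y) f(y)\,dy$. I would split this into the near part, $y$ within distance $\sim 2^{-k}$ of $Q$, and the far part. On the far part I use the rapid decay coming from the weighted $L^2$ estimate on $\Phi_j^*$ (after rescaling, the tail $\int_{|x-y|>2^{j-k}} |\Phi_j^*|^2 \lesssim 2^{-(j-k)\cdot 2s}$), together with Hölder with exponent $q$ (so $1/q+1/q'=1$), to bound the contribution by $2^{-(j-k)s'}M_q f(x)$ for some $s'>0$, which is better than the claimed $2^{(k-j)/q'}$. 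On the near part I exploit the cancellation: since $\int \Phi_j(x,y)\,dy$ is (essentially, up to $O_N(2^{-jN})$) independent of $x$ in the required sense, averaging $\int\Phi_j(x,y)\,dy$ over $x\in Q$ kills the bulk and leaves a gain of $2^{-k}\cdot 2^{j} = 2^{j-k}$ from the derivative bound $|D_y^\alpha\Phi_j^*|$, or more carefully a gain of a positive power $2^{-(j-k)/q'}$ after interpolating the $L^1$-smallness of $\int\Phi_j^*\,dx$ against the $L^2$ size bound on the strip $|x-y|<1$; combining gives $|\mathbb{E}_k(\phi_j f)(x)| \lesssim 2^{(k-j)/q'}M_q f(x)$. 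The second inequality, $j<k-10$, is the ``reverse'' regime: now $\phi_j(P)f$ is essentially constant at scale $2^{-j}$, much coarser than the averaging cube, so the \emph{operator} $\mathbb{B}_k$ (which I read as $\mathbb{D}_k=\mathbb{E}_{k+1}-\mathbb{E}_k$, or at any rate a mean-zero average) annihilates the slowly-varying piece; I would exploit the mean-zero property of $\mathbb{D}_k$ against the Lipschitz regularity of $\Phi_j$ (whose kernel has a $D_y^\alpha$ bound costing $2^{j}$), so applying $\mathbb{D}_k$ to something varying at scale $2^{-j}$ over a cube of side $2^{-k}\ll 2^{-j}$ produces the gain $2^{-(k-j)}$, upgraded to $2^{(j-k)/q'}M_q f(x)$ by the same Hölder-and-tail argument as before.

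The key steps in order: (i) rescale to $\Phi_j^*$ and record the two facts from Section~5 — the $L^1$-cancellation $\int\Phi_j^*(x,y)\,dx=O_N(2^{-jN})$ and the weighted $L^2$ size/derivative bounds; (ii) for $j>k+10$, decompose into near and far pieces, estimate the far piece by tail decay $+$ Hölder $\Rightarrow$ $M_q f$, and the near piece by using the cancellation of the $x$-average over $Q$ $\Rightarrow$ the $2^{(k-j)/q'}$ gain; (iii) for $j<k-10$, decompose similarly but now use the mean-zero property of $\mathbb{D}_k$ (=$\mathbb{B}_k$) against the $2^{-j}$-regularity of the kernel to extract the $2^{(j-k)/q'}$ gain; (iv) in both cases dominate the resulting local averages of $|f|^q$ by the Hardy–Littlewood maximal function $M_q f$.

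The main obstacle I expect is step (ii)/(iii): tracking the cancellation precisely enough to get a genuine \emph{geometric} gain $2^{-|j-k|/q'}$ rather than merely boundedness. The subtlety is that $\int\Phi_j^*(x,y)\,dx$ is only small in an integrated sense, not pointwise, so the ``averaging over $Q$ kills the main term'' heuristic must be made quantitative by interpolating the $O_N(2^{-jN})$ cancellation estimate against the $L^2$ size bound, and by controlling the overlap between the strip $\{|x-y|<1\}$ (in the rescaled variables) and the cube $2^jQ$ — this is exactly where the condition $|j-k|>10$ and the exponent $q'$ enter. Everything else (the tail estimates, the passage to $M_q f$ via Hölder, and absorbing the $O_N(2^{-jN})$ error terms) is routine.
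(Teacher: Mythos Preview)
Your treatment of the case $j<k-10$ is essentially the paper's: use the mean-zero structure of $\mathbb{D}_k=\mathbb{E}_{k+1}-\mathbb{E}_k$ together with the mean-value theorem applied to $\Phi_j^*$ to extract a factor $2^{j}\cdot\textrm{diam}(Q_\alpha^k)$, then dominate by $Mf$.

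For $j>k+10$ there is a genuine gap. Your two-piece near/far split is too coarse, and the proposed ``interpolation of the $L^1$-smallness of $\int\Phi_j^*\,dx$ against the $L^2$ size bound'' does not yield the exponent $1/q'$. The cancellation identity $\int\Phi_j^*(y,z)\,dy=O_N(2^{-jN})$ lets you replace $\int_{Q}\Phi_j(y,z)\,dy$ by $-\int_{Q^c}\Phi_j(y,z)\,dy$ (up to a negligible error), but the latter is small \emph{only} when $z$ is well separated from $\partial Q$; for $z$ within distance $\sim 2^{-j}$ of $\partial Q$ the tail $\int_{Q^c}|\Phi_j(y,z)|\,dy$ is of order one and no gain is available. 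Your ``near'' region contains all such $z$, so cancellation alone gives only $O(M_qf)$ there, with no decay in $j-k$.

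The paper's argument uses a \emph{three}-way split of the $z$-variable: an interior region $A_1\subset Q$ at distance $\gtrsim 2^{-k-(j-k)/2}$ from $\partial Q$, an exterior region $A_2$, and a boundary strip $B$ of this intermediate width. Cancellation handles $A_1$, kernel decay handles $A_2$, and on $B$ one uses \emph{only} the measure estimate $\mu(B)/\mu(Q)\lesssim 2^{-(j-k)/2}$ together with H\"older on $f$:
\[
\frac{1}{\mu(Q)}\int_B |f|\ \le\ \Bigl(\frac{\mu(B)}{\mu(Q)}\Bigr)^{1/q'}\Bigl(\frac{1}{\mu(Q)}\int_B|f|^q\Bigr)^{1/q}\ \lesssim\ 2^{-(j-k)/(2q')}\,M_q f(x).
\]
This is where the exponent $1/q'$ actually originates---from H\"older applied to $f$ over a thin set, not from any interpolation between kernel bounds. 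The missing ingredient in your plan is precisely this boundary-strip argument.
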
 
\begin{proof}
For $x \in M$ we find a unique $Q_{\alpha}^{k}$ such that $x \in Q_{\alpha}^{k}$. Then we have
\begin{equation}\label{knlfx}
\begin{split}
\mathbb{E}_k (\widetilde{\phi}_j(L)  f) (x) =& \frac{1}{\mu (Q_{\alpha}^{k})} \int_{Q_{\alpha}^{k}} (\widetilde{\phi}_n(L)  f) (y) dy
\\
=&\frac{1}{\mu(Q_{\alpha}^{k})} \int_{Q_{\alpha}^{k}} \left[ \int_G 2^{Qj/2} \Phi_j^{*} (2^{j/2}y, 2^{j/2}z) f (z) dz \right] dy
\\
=& \frac{1}{\mu(Q_{\alpha}^{k})} \int_G \left[ \int_{Q_{\alpha}^{k}} 2^{Qj/2} \Phi_j^{*} (2^{j/2} y, 2^{j/2}z ) dy \right] f (z) dz.
\end{split}
\end{equation}
We set $d(k,j):= j-k$.
\

We first consider the case $d(k,j)>10$.  In \eqref{knlfx} we split the domain of the variable $z$ into the following disjoint sets: 
\begin{itemize}
\item[-] $B = \{ z : \dist(z, \partial Q^{k}_{\alpha}) \leq 2^{- [ (-\log_2 \delta) k + \frac{d(k,n)}{2}]} \}$
\item[-] $ A_1 = Q_{\alpha}^{k} \cap B^{c}$
\item[-] $ A_2 = (Q_{\alpha}^{k})^{c} \cap B^{c}$.
\end{itemize}
Then we see that $G= B \cup A_1 \cup A_2$ and we have $f = f_{A_1} + f_{A_2} + f_B := f \chi_{A_1} + f \chi_{A_2} + f \chi_{B}$. Thus we have
\begin{eqnarray*}
\mathbb{E}_k (\widetilde{\phi}_j (L) f)(x) = \mathbb{E}_k (\widetilde{\phi}_j (L) f_{A_1})(x) + 
\mathbb{E}_k (\widetilde{\phi}_j (L) f_{A_2})(x)
+\mathbb{E}_k (\widetilde{\phi}_j (L) f_{B})(x).
\end{eqnarray*}
We shall estimate the each three terms in the below.
\

\noindent$\cdot ~Estimate ~for ~f_{A_1}$. 
\\
Substituting $f$ with $f_{A_1}$ in \eqref{knlfx} we have 
\begin{equation}\label{3C}
\begin{split}
\mathbb{E}_k (\widetilde{\phi}_j (L)  f_{A_1} (x)) =&\frac{1}{\mu (Q_{\alpha}^{k})} \int_{Q^{k}_{\alpha}} \left[ \int_G 2^{nj} \Phi_j^{*}(2^{n}y, 2^{n} z) 1_{A_2} (z) f(z) dz \right] dy
\\
 =& \frac{1}{\mu(Q_{\alpha}^{k})} \int_G \left[ \int_{Q_{\alpha}^{k}} 2^{nj} \Phi_j^{*}(2^n y, 2^n z) dy \right] \chi_{A_1} (z) f (z) dz.
\end{split}
\end{equation}
Observing $\int \Phi_j^* (x,y) dy=O(2^{-j})$ we deduce that
\begin{equation}\label{3D}
\begin{split}
\left| \int_{Q_{\alpha}^{k}} 2^{nj} \Phi_j^* (2^n y, 2^n z) dy \right| =&  \left|\int_{(Q_{\alpha}^{k})^{c}} 2^{nj} \Phi_j^* (2^j y, 2^j z)dy \right|
\\
\leq & \int_{(Q_{\alpha}^{k})^{c}} 2^{nj} |\Phi_j^* (2^{j}y, 2^j z)| ~dy
\\
\leq & \int_{|y-z| \geq 2^{- [ (-\log_2 \delta) k + \frac{d(k,n)}{2}]}} 2^{nj} |\Phi_j^* (2^j y, 2^j z)| ~dw
\\
\leq & \int_{|y-z| \geq 2^{m/2}} |\Phi_j^* (y,z)| ~dy \\
\leq & \int_{|y-z|\geq 2^{m/2}} |y-z|^{-N} dy \leq 2^{-m/2 c},
\end{split}
\end{equation}
where the second inequality holds since $z \in A_1 = Q_{\alpha}^{k} \cap B^{c}$ and $y \in (Q_{\alpha}^{k})^{c}$. Combining this estimate with \eqref{3C} we obtain
\begin{equation}\label{fa1}
\begin{split}
|\mathbb{E}_k (\widetilde{\phi}_n(L)  f(x))| &\leq \frac{1}{\mu(Q_{\alpha}^{k})} \int_G 2^{-mc/2} 1_{A_1}(z) f(z) dz
\\
&\leq  2^{-mc/2} M f (x).
\end{split}
\end{equation}

\noindent $\cdot~ Estimate ~for~ f_{A_2}$.  
\

As in \eqref{3C} we have 
\begin{eqnarray}\label{ekpnl}
\mathbb{E}_k ( \widetilde{\phi}_j (L)  f_{A_2}(x)) &=& \frac{1}{\mu (Q_{\alpha}^{k})} \int_{Q^{k}_{\alpha}} \left[ \int_G 2^{nj} \Phi_j^* (2^{j}y, 2^j z) 1_{A_2} (z) f(z) dz \right] dy.
\end{eqnarray}
Observe that we have $|(y - z)| \geq 2^{- [ (-\log_2 \delta) k + \frac{d(n,k)}{2}]}$ for $ z \in A_2 =(Q_{\alpha}^{k})^{c} \cap B^{c}$ and $y \in Q_{\alpha}^{k}$. It implies $|2^{j} (y-z)| \geq 2^{j + (\log_2 \delta)k -\frac{d(n,k)}{2}} = 2^{\frac{d(n,k)}{2}}$. Then, using \eqref{kx1xn} we deduce that
\begin{eqnarray*}
\sup_{ y \in Q_{\alpha}^k} \int_{A_2} 2^{nj}\left| \Phi_j^* (2^{j}y, 2^j z)\right| dz \lesssim \int_{ |x| \geq 2^{d(n,k)/2}} (1 + |x|)^{-3N} dx \lesssim 2^{-d(n,k) N}. 
\end{eqnarray*}
By this we get 
\begin{eqnarray*}
 \int_G \sup_{ y \in Q_{\alpha}^{k}} \left| 2^{nj}  \Phi_j^* (2^{j}y, 2^j z) 1_{A_2} f (z) \right| dz  \leq M f (x) \cdot 2^{-d(n,k) N}.
\end{eqnarray*}
It enable us to estimate \eqref{ekpnl} in the following way
\begin{eqnarray}\label{fa2} 
\mathbb{E}_k (\widetilde{\phi}_j (L)  f_{K_2} (x))\lesssim \frac{2^{-d(n,k)N}}{\mu (Q_{\alpha}^k)} \int_{Q_{\alpha}^k} M f (x) dy  = M f (x) \cdot 2^{-d(n,k) N}.
\end{eqnarray}
\noindent$\cdot ~Estimate ~ for ~ f_B$.
\\
We have
\begin{equation}\label{fb}
\begin{split}
|\mathbb{E}_k (\phi_j (L) f_B )(x)| &= \frac{1}{\mu(Q_{\alpha}^{k})} \left| \int_B \left[ \int_{Q_{\alpha}^{k}} 2^{nj} \Phi_j^* (2^n y, 2^n z) dy \right] f(z) dz \right|
\\
&\leq \frac{1}{\mu (Q_{\alpha}^k)} \int_B \left[\int_{Q_{\alpha}^{k}} 2^{nj}\Phi_j^* (2^n y, 2^n z)| dy\right] f(z) dz
\\
&\leq \frac{1}{\mu(Q_{\alpha}^{k})} \int_B \left( \int_G 2^{nj} |\Phi_j^* (2^j y, 2^j z)| dy\right) |f(z)| dz
\\
&= \frac{C}{\mu(Q_{\alpha}^{k})} \int_B |f(z)| dz.
\end{split}
\end{equation}
Note that $\mu (B) \leq C \mu(Q_{\alpha}^{k}) 2^{-\frac{d(n,k)}{2}\rho}$. Using this fact,  we can estimate \eqref{fb} as follows.
\begin{equation}\label{fb2}
\begin{split}
| \mathbb{E}_k (\phi_n (L) f_B)(x)| &\leq C \frac{1}{\mu(Q_{\alpha}^{k})} \int_B |f(z) | dz 
\\
&\leq C \frac{1}{\mu(Q_{\alpha}^{k})} \mu (B)^{1/q'} \left( \int_B |f(z)|^{q} dz\right)^{1/q}
\\
&\leq C 2^{-\frac{\rho}{2q'} d(n,k)} \left( \frac{1}{\mu(Q_k^{\alpha})} \int_B |f(z)|^q dx \right)^{1/q} 
\\
&\leq C 2^{-\frac{\rho}{2q'}d(n,k) } M_q f(x).
\end{split}
\end{equation}
Now, we can combine \eqref{fa1}, \eqref{fa2} and \eqref{fb} to have 
\begin{equation*}
\begin{split}
|\mathbb{E}_k (\phi_n (L) f)| &=| \mathbb{E}_k (\phi_n (L)( f_{A_1}+ f_{A_2} + f_B))(x) |
\\
&\lesssim 2^{- d(n,k) \gamma} M_q f(x),
\end{split}
\end{equation*}
where $\gamma = \min (\frac{c}{2}, \frac{\rho}{2q'})$. It proves the lemma for the case $d(k,n)>10$. 
\

We now turn to the case $d(k,n) <10$. By the definition we have $\mathbb{D}_k (\widetilde{\phi}_j(L) f) =\mathbb{E}_{k+1} (\widetilde{\phi}_j(L)  f) -\mathbb{E}_{k} (\widetilde{\phi}_j(L)  f)$ and
\begin{equation}\label{eq-a-1}    
\begin{split} \mathbb{D}_{k}& (\widetilde{\phi}_j (L)  f) (x) \\
=& ~\frac{1}{\mu(Q_{\alpha}^{k+1})} \int_{Q_{\alpha}^{k+1}} (\widetilde{\phi}_j(L)  f) (y) dy - \frac{1}{\mu(Q_{\alpha}^{k})} \int_{Q_{\alpha}^{k}} (\widetilde{\phi}_j (L)  f)(y) dy
\\
=& ~\int_G  f(z) \left[ \frac{1}{\mu(Q_{\alpha}^{k+1})} \int_{Q_{\alpha}^{k+1}} 2^{nj} \Phi_j^{*}(2^j y, 2^j z) dy - \frac{1}{\mu(Q_{\alpha}^{k})} \int_{Q_{\alpha}^{k}} 2^{nj} \Phi_j^* (2^j y, 2^j z) dy\right] dz 
\\
=& ~\int_G  f(z) \left[ \frac{1}{\mu(Q_{\alpha}^{k+1})} \int_{Q_{\alpha}^{k+1}} 2^{nj} \left[ \Phi_j^* (2^{j}y, 2^j z) - \Phi_j^* (2^{j}x, 2^j z) \right]dy \right] dz
\\
\quad\quad&\quad\qquad\qquad- \int_G f(z) \left[ \frac{1}{\mu(Q_{\alpha}^{k})} \int_{Q_{\alpha}^{k}} 2^{nj} \left[ \Phi_j^* (2^{j}y, 2^j z)- \Phi_j^* (2^j x, 2^j z)\right] dy\right] dz 
\\
:=& ~A_1 + A_2,
\end{split}  
\end{equation}  
By the mean value theorem  there is a constant  $\beta>0$ such that
\begin{equation}\label{eq-meanvalue}
\begin{split}
&\left| \Phi_j^* ( (2^{j}(y-x) + 2^j (x), 2^j z) - \Phi_j^*  (2^{j}x, 2^j z) \right|
\\
&\quad\quad\quad\quad\quad\quad\quad\leq C \sum_{i=1}^{d} |2^{j}  (yx^{-1})|^{d_j} \sup_{|w|\leq | 2^{j} (y-x)|} \left| X_i \Phi_j^* (w+2^{j}x, 2^j z) \right|
\end{split}
\end{equation}
For $x, y \in Q_{\alpha}^{k}$ we have   $|(yx^{-1})| \leq \delta^k$, and so $|2^{n/2} (y x^{-1})|\leq 2^{n/2} 2^{(\log_2 \delta)k}\leq 2^{-10}$ by the assumption. Using this we deduce that
\begin{equation}\label{eq-a-2}
\begin{split}
&\quad \sum_{i=1}^{d} |2^{j}(x-y)|^{d_i} \sup_{|w|\leq |2^{j}(y-x)|} \left| X_i \Phi_j^{*} (w + 2^{j}x, 2^j z) \right|
\\
&\quad\quad\quad\quad\quad\quad\quad\leq C \sum_{i=1}^{d} (2^{j} \delta^{k})^{d_i} \sup_{|w|\leq  2^{-10}} \left( 1 +   |(w + 2^{j} x)- 2^j z|\right)^{-N}
\\
&\quad\quad\quad\quad\quad\quad\quad\leq C  (2^{j} \delta^{k}) \left( 1 +  | 2^{j}(x - z)|\right)^{-N}.
\end{split}
\end{equation}
Combining \eqref{eq-a-1}, \eqref{eq-meanvalue} and \eqref{eq-a-2} we get
\begin{equation}
\begin{split}
|A_1 | &\leq C (2^{j} \delta^{k}) \int_G  2^{nj} \left(1 + |2^{j} (x-z)|\right)^{-N} f(z) dz
\\
&\leq C (2^{j} \delta^{k})  M f(x).
\end{split}
\end{equation}
The same argument shows that $|A_2| \lesssim (2^{j} \delta^{k}) M f(x)$. Therefore we have shown that
\begin{eqnarray*}
|\mathbb{D}_k (\widetilde{\phi}_j (L) f ) (x) | \leq C  (2^{j} \delta^k) M f (x).
\end{eqnarray*}
It finish the proof of the lemma.
\end{proof}

\begin{prop}\label{prop} Suppose that $m \in L^{\infty}[0,\infty)$ satisfies \eqref{smooth} for $s>0$. Then, for $ x \in G$, $S (m^{loc}(P)f)(x) \leq A_r \| m \|_{L_2^{s}} G_r (f) (x)$ for each $s> \frac{d}{r}$.
\end{prop}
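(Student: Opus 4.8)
The goal is to bound the martingale square function $S(m^{loc}(P)f)(x) = \bigl(\sum_{k\ge 0} |\mathbb{D}_k(m^{loc}(P)f)(x)|^2\bigr)^{1/2}$ pointwise by $A_r\|m\|_{L_2^s}\, G_r(f)(x)$. The natural plan is to expand $m^{loc}(P) = \sum_{j\ge 1}\phi_j^{loc}(P)\,[m_j^{loc}(P)\circ\widetilde\phi_j^{loc}(P)]\,\phi_j^{loc}(P)$, so that $\mathbb{D}_k(m^{loc}(P)f) = \sum_{j\ge 1}\mathbb{D}_k\bigl(\phi_j^{loc}(P) H_j *(\phi_j^{loc}(P)f)\bigr)$ where $H_j$ is the kernel studied in Lemma~\ref{lem-hj} and Corollary~\ref{lem-hj-2}. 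The idea is to group the sum over $j$ according to $d=|j-k|$: for each fixed offset $d$, estimate $\|\mathbb{D}_k(\,\cdot\,)\|$ and show it decays geometrically in $d$, then use that $\ell^2$ in $k$ of a geometrically decaying-in-$d$ family is controlled by the $\ell^2$-in-$k$ of the pieces at comparable scales, which is exactly the square function $G_r(f)$.

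Concretely, first I would reduce to controlling $\mathbb{D}_k(\phi_j^{loc}(P) g_j)$ where $g_j := H_j*(\phi_j^{loc}(P)f)$, and observe that the kernel of $\phi_j^{loc}(P)$ is $\Phi_j(x,y) = 2^{jn}\Phi_j^*(2^jx,2^jy)$ with the weighted $L^2$ control and the cancellation $\int\Phi_j(x,y)\,dx = O_N(2^{-jN})$ from Lemma~\ref{lem-lj}. The preceding lemma (the one just before this Proposition) gives precisely the two estimates needed: $|\mathbb{E}_k(\phi_j^{loc}(P) h)(x)| \lesssim 2^{-(j-k)/q'} M_q h(x)$ when $j > k+10$ (using the cancellation of $\Phi_j$ to exploit that dyadic cubes at scale $2^{-k}$ are much larger than the kernel scale $2^{-j}$), and $|\mathbb{D}_k(\phi_j^{loc}(P) h)(x)| \lesssim 2^{-(k-j)} M h(x)$ when $j < k-10$ (using the smoothness/mean value bound since the cube is much smaller than the kernel scale). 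For the middle range $|j-k|\le 10$ one just uses boundedness of the martingale differences on the relevant $L^q$ via the maximal function. Applying these with $h = g_j$ and noting $M_q(g_j)(x) \lesssim \mathcal{M}(|H_j*(\phi_j^{loc}(P)f)|^q)(x)^{1/q}$, together with the $L^1$-kernel bound \eqref{eq-k-l1} propagated through $H_j$, lets me dominate $|\mathbb{D}_k(\phi_j^{loc}(P)g_j)(x)|$ by $2^{-\gamma|j-k|}\,\mathcal{M}(|\phi_j^{loc}(P)f|^r)(x)^{1/r}\,\|m_j\|_{L_2^s}$ for some $\gamma>0$ — here the hypothesis $s>n/r$ is what makes the preceding "$|H_j*f|\lesssim M_r f\cdot\|m_j\|_{L_2^\alpha}$" lemma applicable with a genuine gain.

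Then the endgame is a standard Schur-type / Cauchy–Schwarz argument: write $\mathbb{D}_k(m^{loc}(P)f)(x) = \sum_j \mathbb{D}_k(\phi_j^{loc}(P)g_j)(x)$, so
\[
|\mathbb{D}_k(m^{loc}(P)f)(x)| \lesssim \|m\|_{L_2^s}\sum_j 2^{-\gamma|j-k|}\,\mathcal{M}(|\phi_j^{loc}(P)f|^r)(x)^{1/r},
\]
square, sum in $k$, and use $\bigl(\sum_k (\sum_j a_{|j-k|} b_j)^2\bigr)^{1/2} \le (\sum_d a_d)\bigl(\sum_j b_j^2\bigr)^{1/2}$ with $a_d = 2^{-\gamma d}$ and $b_j = \mathcal{M}(|\phi_j^{loc}(P)f|^r)(x)^{1/r}$; the first factor is a finite constant and the second is exactly $G_r(f)(x)$. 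This yields $S(m^{loc}(P)f)(x)\lesssim \|m\|_{L_2^s} G_r(f)(x)$ with an absolute constant, as claimed.

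The main obstacle I anticipate is bookkeeping the composition of three operators cleanly: one must verify that the cancellation/smoothness estimates for $\phi_j^{loc}(P)$ from the preceding lemma survive when its argument is $H_j*(\phi_j^{loc}(P)f)$ rather than a raw function, i.e. that $M_q$ of that composite is still controlled by $\mathcal{M}(|\phi_j^{loc}(P)f|^r)^{1/r}$ uniformly in $j$ — this uses that $H_j$ has an $L^1$-bounded rescaled kernel, so convolution with $H_j$ commutes (up to constants) with the maximal function at scale $2^{-j}$. The other delicate point is making sure the geometric decay rate $\gamma = \min(1/q', 1) > 0$ obtained from the two one-sided estimates is uniform in $j,k$ and does not degenerate; this is where the choice $\tfrac1q + \tfrac1r = 1$ with $q>2$, forced by $s>n/r$, is used, and it is the same mechanism already exploited in the preceding lemma.
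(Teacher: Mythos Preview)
Your proposal is correct and follows essentially the same approach as the paper: expand $\mathbb{D}_k(m^{loc}(P)f)$ as a sum over the dyadic index $j$, invoke the preceding lemma to get the geometric decay $2^{-\gamma|j-k|}$ in front of a maximal function of $\phi_j^{loc}(P)f$, and then use Cauchy--Schwarz (your Schur-type inequality) to pass from the $\ell^1$-in-$j$ sum to the $\ell^2$-in-$j$ square function $G_r(f)$. The paper's proof is exactly this, though written more tersely and with less attention to the composite structure you flagged as a potential obstacle; your discussion of why $M_q$ of $H_j*(\phi_j^{loc}(P)f)$ is still controlled by $\mathcal{M}(|\phi_j^{loc}(P)f|^r)^{1/r}$ is a detail the paper glosses over.
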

\begin{proof}
We have
\begin{equation*}
\begin{split}
|\mathbb{B}_k (Tf)| &= | \sum_{j \in \mathbb{Z}} \mathbb{B}_k (\bar{\phi}_j \widetilde{H}_j (x,z) \widetilde{\phi}_j (P) f ) |
\\
& \leq \sum_{j \in \mathbb{Z}} 2^{- | k  -j |} M^r ( \widetilde{\phi}_j f).
\end{split}
\end{equation*}
Using the Cauchy-Schwartz inequality we have
\begin{eqnarray*}
|\mathbb{B}_k (Tf)|^2 \leq  ( \sum_{j \in \mathbb{Z}} 2^{- |k - j |}) \sum_{j \in \mathbb{Z}} 2^{- |k -j| } (M_r (\widetilde{\phi}_j f ))^2.
\end{eqnarray*}
Summing up this, we deduce that
\begin{eqnarray*}
S(Tf) (x) = (\sum_{k=1}^{\infty} |\mathbb{B}_k (Tf)|^2)^{1/2} \leq C ( \sum_{n \in \mathbb{Z}} |M_q (\widetilde{\phi}_j f )|^2)^{1/2}.
\end{eqnarray*}
It completes the proof.
\end{proof}

\section{Proof of Proposition \ref{prop-main}}
We need to bound 
\begin{eqnarray*}
\left\| \sup_{1 \leq i \leq N } |T_i f|\right\|_p = \left( p 4^p \int^{\infty}_0 \lambda^{p-1} \textrm{meas} ( \{ x : \sup_{i} | T_i f(x)|  > 4\lambda\}) d\lambda\right)^{1/p}
\end{eqnarray*}
by some constant time of $ \sqrt{\log (N+1)} \| f\|_p$. By proposition \ref{prop} we have the pointwise bound
\begin{eqnarray}\label{SG}
S( T_i f ) \leq A_r B G_r (f).
\end{eqnarray}
We bound the level set as
\begin{eqnarray*}
\{ x : \sup_{1 \leq i \leq N} |T_i f (x)| > 4 \lambda \} \subset E_{\lambda,1} \cup E_{\lambda,2} \cup E_{\lambda,3},
\end{eqnarray*}
where  
\begin{eqnarray*}
\epsilon_N : = ( \frac{c_d} {10 \log (N+1)})^{1/2}
\end{eqnarray*}
and
\begin{equation*}
\begin{split}
E_{\lambda, 1} &= \{ x : \sup_{1 \leq i \leq N} | T_i f (x) - \mathbb{E}_{-N} T_i f (x)| > 2\lambda, G_r (f) (x) \leq \frac{\varepsilon_N \lambda}{A_r B}\},
\\
E_{\lambda, 2} &= \{ x : G_r (f)(x) > \frac{\varepsilon_N \lambda}{A_r B}\},
\\
E_{\lambda, 3} &= \{ x : \sup_{1 \leq i \leq N} | \mathbb{E}_0 T_i f (x) > 2\lambda \}.
\end{split}
\end{equation*}
By \eqref{SG}, 
\begin{eqnarray*}
E_{\lambda,1} \subset \bigcup_{i=1}^{N} \{ x : |T_i f(x)| > 2\lambda, S (T_i f ) \leq \varepsilon_N \lambda \}
\end{eqnarray*}
and  we have
\begin{equation*}
\begin{split}
\textrm{meas}(E_{\lambda,1}) &\leq \sum_{i=1}^{N} \textrm{meas} ( \{ x : |T_i f (x) - \mathbb{E}_{-N} T_i f (x)| > 2\lambda, S(T_i f) \leq \varepsilon_N \lambda\})
\\
&\leq  \sum_{i=1}^{N} C \exp ( - \frac{c_d}{\varepsilon_N^2}) \textrm{meas} ( \{ x : \sup_{k} | \mathbb{E}_k (T_i f)| > \lambda \}).
\end{split}
\end{equation*}
Therefore
\begin{equation*}
\begin{split}
\left( p \int_0^{\infty} \lambda^{p-1} \textrm{meas} (E_{\lambda, 1}) d\lambda\right)^{1/p}& \leq C \left( \sum_{i=1}^{N} \exp (-\frac{c_d}{\varepsilon_N^2} )\left\| \sup_{k} |\mathbb{E}_k (T_i f)\right\|_p^p \right)^{1/p}
\\
&\leq C \left( \sum_{i=1}^{N} \exp (-\frac{c_d}{\varepsilon_N^2}) \left\| T_i f\right\|_p^p\right)^{1/p}
\\
&\leq C B \left( N \exp (-\frac{c_d}{\varepsilon_N^2})\right)^{1/p} \|f \|_p
\\
&\leq C B\|f\|_p.
\end{split}
\end{equation*}
Using a change of variables we have 
\begin{equation*}
\begin{split}
\left( p \int_0^{\infty} \lambda^{p-1} \textrm{meas}(E_{\lambda,2}) d\lambda\right)^{1/p} &= \frac{A_r B}{\varepsilon_N} \| G_r (f)\|_p
\\
&\leq C B \sqrt{\log (N+1)} \| f\|_p.
\end{split}
\end{equation*}
Finally, by  the Fefferman-Stein inequality we have
\begin{eqnarray*}
\textrm{meas}(E_{\lambda,3} ) \leq \sum_{i=1}^{N} \textrm{meas} ( \{ x : |\mathbb{E}_{-N} T_i f(x)| > 2 \lambda\})
\end{eqnarray*}
and thus
\begin{equation*}
\begin{split}
\left( p \int_0^{\infty} \lambda^{p-1} \textrm{meas}(E_{\lambda,3}) d\lambda\right)^{1/p} &= 2 \left\| \sup_{i=1,\dots,N} |\mathbb{E}_{-N} (T_i f)| \right\|_p
\\
&\leq  \sup_{i=1,\cdots,N} \| T_i f\|_p
\\
&\leq   \| f\|_p.
\end{split}
\end{equation*}
The above estimates completes the proof.

\end{document}